\begin{document}

\newtheorem{remark}{Remark}[section]
\newenvironment{proof}{{\noindent\it Proof.}\quad}{\hfill $\square$\par}
\newtheorem{theorem}{Theorem}[section]
\newtheorem{lemma}{Lemma}[section]

\numberwithin{equation}{section}

\title{\Large   Accelerated Primal Dual Method for a Class of Saddle Point Problem with Strongly Convex Component
} 
\author{Zhipeng Xie\qquad Jianwen Shi
	\thanks{pxie@hqu.edu.cn, College of Computer Science and Technology, Huaqiao University, Xiamen,Fujian,361021,PRChina}
	}

\date{}

\maketitle
\begin{spacing}{1.5}
   
	\rmfamily
	\noindent \textbf{Abstract:} This paper presents a simple primal dual method named DPD which is a flexible framework for a class of saddle point problem with or without strongly convex component. The presented method has linearized version named LDPD and exact version EDPD. Each iteration of DPD updates sequentially the dual and primal variable via simple proximal mapping and refines the dual variable via extrapolation. Convergence analysis with smooth or strongly convex primal component recovers previous state-of-the-art results, and that with strongly convex dual component attains full acceleration $O(1/k^2)$ in terms of primal dual gap. Total variation image deblurring on Gaussian noisy or Salt-Pepper noisy image demonstrate the effectiveness of the full acceleration by imposing the strongly convexity on dual component.\\\\
\noindent \textbf{Keywords:}   saddle point problem, strongly convex component, multi-step gradient method, full acceleration, total variation image deblurring. \\\\
\noindent\textbf{Mathematics Subject Classification:} 90C06, 90C25, 90C46, 90C47

\section{Introduction}

\subsection{Saddle Point Problem with Strongly Convex Component}
In this paper, we consider the saddle point problem (SPP)\cite{Xu2016Accelerated,Chambolle2011A,Chen2013Optimal,Ouyang2014An,Goldstein2014Fast}
in the form of
\vspace{-2mm}
\begin{equation}
\mathop {\min }\limits_x \mathop {\max }\limits_y \left\{ {L(x,y) = f(x) + \left\langle {Ax,y} \right\rangle  - g(y)} \right\}
\label{eq1.1}
\end{equation}

\noindent where $f(x)$ denotes primal component,  $g(y)$ dual component and $<Ax,y>$ the bilinear inner product term. In the case of $g(y)$ being conjugate of $G(x)$ , Model (\ref{eq1.1}) can be converted into 
composite optimization as
\vspace{-2mm}
\begin{equation}
\mathop {Min}\limits_{x \in X} \begin{array}{*{20}{c}}
{}
\end{array}f(x) + G(Ax)
\end{equation}

\noindent which covers many sophisticated applications including total variation image processing
\cite{Xu2016Accelerated,Chambolle2011A,Chen2013Optimal} and overlapping group lasso optimization\cite{Chen2013Optimal}\cite{Lei2013Efficient}.
 In this paper, we mainly focus on the case where primal component $f(x)$ or dual component $g(y)$ is strongly convex. For instance, total variation image denoising can be written as
\begin{equation}
\mathop {Min}\limits_{x \in X} \begin{array}{*{20}{c}}
{}
\end{array}(\alpha /2)||x - b|{|^2} + ||Dx|{|_{2,1}}
\end{equation}

\noindent which contains strongly convex primal component $f(x) = \alpha ||x - b|{|^2}/2$ . Image deblurring with smoothed total variation can be written as

\begin{equation}
\mathop {Min}\limits_x \frac{\alpha }{2}||Ax - b|{|^2} + \mathop {Max}\limits_y \left\langle {Dx,y} \right\rangle  - \frac{{{\mu _g}}}{2}||y|{|^2}
\end{equation}

\noindent which uses strongly convex dual component $g(y) = {\mu _g}||y|{|^2}/2$.

\subsection{ Recent Advances in Primal-Dual Methods}
\indent Nowadays first order methods become popular. Various efforts \cite{Amir2009Fast,Beck2009AFI,Beck2014A,Chambolle2011A,Chen2013Optimal,Goldstein2014Fast,
	Lei2013Efficient,Ouyang2014An,Xu2016Accelerated,Yang2012An}  are devoted to achieve better performance for different problems. Multi-step gradient acceleration with convergence rate $O(1/k^2)$ was proposed firstly by Nesterov in \cite{Nesterov1983A,Nesterov1988A,Nesterov2005A} , which encompasses three different methods whose unified analysis can be found in Paul Tseng’s work\cite{Tseng2008A}. Since the prevail of FISTA\cite{Beck2009AFI} by Beck and Teboulle in machine learning, more and more first order methods \cite{Chen2013Optimal,Ouyang2014An,Goldstein2014Fast,Lei2013Efficient,Goldfarb2013Fast,Becker2009NESTA} adopts different type of Nesterov accelerating methods. Note that directly applying multi-step gradient method to overlapping group lasso or total variation image deblurring usually demands two loops iteration: outer loop for approximating smooth objective component and inner loop for total variation image denoising sub-problem.

Meanwhile, with single layer iteration structure and fast Fourier transformation, Alternating Direction Methods of Multiplier (ADM)\cite{Deng2016A,He2012A,He2012Convergence,Chan2013Constrained,Ng2011Inexact,Boyd2010} become popular in image processing and machine learning partly due to proof on $O(1/k)$ convergence rate by He and Yuan \cite{He2012A}, linear convergence by Deng and Yin \cite{Deng2016A} and tutorial and analysis by Boyd \cite{Boyd2010}. Nevertheless, linearization and its acceleration are less addressed in the classic ADM methods.

On the other hand, since classic primal-dual proximal methods directly work on saddle point model, it draws continuous attentions\cite{Chambolle2011A,Chen2013Optimal,Ouyang2014An,Goldstein2014Fast}  in both analysis and applications. The $O(1/k)$ convergence rate for exact primal-dual method was presented by Chambolle and Pock in \cite{Chambolle2011A} where acceleration via strongly convex component is illustrated in terms of distance between primal-dual iterations and saddle points. In \cite{Goldstein2014Fast} ,Goldstein presented fast ADM with $O(1/k^2)$ convergence rate in terms of dual objective, which requires both objective functions being strongly convex . Nevertheless, the above two methods did not address the linearization of objective. Recently, using second type  of multi-step gradient method, Chen, Lan, and Ouyang propose optimal primal-dual method \cite{Chen2013Optimal}  for a class of weakly convex deterministic or stochastic problems with convergence rate $O(1/k^2)+O(1/k)$.  In \cite{Ouyang2014An}, Ouyang, Chen, Lan and Pasiliao present accelerated linearized ADM with convergence rate $O(1/k^2)+O(1/k)$. Very lately, Xu Yangyang \cite{Xu2016Accelerated} provides Linearized ADM with full acceleration $O(1/k^2)$ in terms of primal objective and residual. The full accelerations in \cite{Xu2016Accelerated} utilizes smoothness and strongly convexity of primal component and seems escaping successfully the gravity of Nesterov’s multi-step gradient method. Convergence analysis in \cite{Xu2016Accelerated}  is very intuitive and interesting.
\subsection{ Motivation and Contribution}
Motivated by the fine arts \cite{Xu2016Accelerated} on full acceleration via strongly convexity of primal component,  this paper presents a primal-dual method named DPD which allows to utilize the strongly convexity of both primal and dual component. The presented method updates sequentially dual-primal variable by gradient projections and corrects dual variable by extrapolation, hence avoiding extra-gradient projection for second dual update as classical extra-gradient method \cite{Nemirovski2004A,Bonettini2014An}. DPD allows linearization of primal component, partial acceleration and full acceleration. Its convergence analysis recovers state-of-the-art result for a class of weak convex \cite{Chen2013Optimal,Yang2012An,Nemirovski2004A} or strongly convex problem \cite{Xu2016Accelerated}.\\
\indent \emph{Contribution of presented DPD mainly consists of two points.\\}
\indent(1) Linearized DPD (LDPD) with strongly convex dual component achieves full acceleration $O(1/k^2)$ on Gaussian noisy image deblurring. It converges faster than treating as weakly convex problem with partial acceleration $O(1/k^2)+O(1/k)$.\\
\indent(2) Exact DPD (EDPD) with strongly convex dual component attains full acceleration   
$O(1/k^2)$ on Salt-Pepper noisy image deblurring, and converges faster than treating as weakly convex non-smooth problem with the rate $O(1/k)$.

\subsection{Paper Layout}
The rest of this paper is organized as follows. Section 2 presents Iteration of Linearized DPD(LDPD). Convergence analysis for LDPD is presented in section 3 and it covers multi-step gradient LDPD with partial acceleration, full acceleration by strongly convex dual component and that by primal component. Section 4 presents Exact DPD(EDPD) without linearization and develops analysis on full acceleration by strongly convex primal or dual component. Section 5 discusses implementation of LDPD and EDPD with strong convex dual component in noisy image deblurring. Section 6 illustrates acceleration performance of DPD in noisy Image deblurring. Finally is a brief discussion.

\section{Linearized DPD(LDPD) }\label{section2}
For LDPD, we assume primal component $f(x)$ is differentiable with gradient lipschitz constant $L_f$ , then linearize $f(x)$ and use multi-step gradient method to update primal variable $x$.
\newpage
\subsection{Iteration of LDPD}
Let ${x_1} \in X,{y_1} \in Y,{\theta _t} \in (0,1],{\bar x_1} = {x_1},{\hat y_1} = {y_1},\forall N $ , iteration of LDPD proceeds as\\
\noindent\rule{17cm}{0.05em}\\
	\textbf{For} $t=1,2,...,N$ 
	\vspace{-5mm}
	\begin{align}
	{\hat x_t\quad}&= (1 - {\theta _t}){\bar x_t} + {\theta _t}{x_t}\label{eq2.1}\\
	{x_{t + 1}}&= \arg {\min _x}{\rm{||}}x - [{x_t} - {\eta _t}(\nabla f({\hat x_t}) + {A^T}{\hat y_t})]{\rm{|}}{{\rm{|}}^2}\label{eq2.2}\\
	{\bar x_{t + 1}}&= (1 - {\theta _t}){\bar x_t} + {\theta _t}{x_{t + 1}}\label{eq2.3}\\
	{y_{t + 1}}&= \arg {\min _y}||y - ({y_t} + {\tau _t}A{x_{t + 1}})|{|^2}/(2{\tau _t}) + g(y)\label{eq2.4}\\
	{\hat y_{t + 1}}&= ({y_{t + 1}} - {y_t}){\alpha _{t + 1}} + {y_{t + 1}}\label{eq2.5}\\
	{\bar y_{t + 1}}&= (1 - {\theta _t}){\bar y_t} + {\theta _t}{y_{t + 1}}\label{eq2.6}
	\end{align}
\textbf{End For}\\
	\rule{17cm}{0.05em}\\
		\vspace{-5mm}

Setting  $\theta_t=1$ would change multi-step gradient LDPD (\ref{eq2.1})-(\ref{eq2.6}) into single-step gradient which uses the following parameter
\begin{equation}
{\hat x_t} = {x_t},\nabla f({\hat x_t}) = \nabla f({x_t}),{\bar x_{t + 1}} = {x_{t + 1}},{\bar y_{t + 1}} = {y_{t + 1}}
\end{equation}

\subsection{Primal Dual Gap }
\newtheorem{prop}{Proposition}[section]
\begin{prop}
With convex concave function defined in (\ref{eq1.1}), Primal Dual Gap ${E_{t + 1}}$  with respect to aggregation point $({\bar x_{t + 1}},{\bar y_{t + 1}})$  is defined as \\
\vspace{-5mm}
\begin{equation}
\begin{array}{l}
{E_{t + 1}} = L({{\bar x}_{t + 1}},y) - L(x,{{\bar y}_{t + 1}})\\
= f({{\bar x}_{t + 1}}) + \left\langle {A{{\bar x}_{t + 1}},y} \right\rangle  - g(y) - \left[ {f(x) + \left\langle {Ax,{{\bar y}_{t + 1}}} \right\rangle  - g({{\bar y}_{t + 1}})} \right]\\
{\rm{ = }}f({{\bar x}_{t + 1}}) - f(x){\rm{ + }}\left\langle {A{{\bar x}_{t + 1}},y} \right\rangle  - \left\langle {Ax,{{\bar y}_{t + 1}}} \right\rangle  - \left[ {g(y) - g({{\bar y}_{t + 1}})} \right]
\end{array} \label{eq2.8}
\end{equation}
For ${\theta _t} \in [0,1]$, it holds that
\begin{equation}
\begin{array}{l}
\left[ {{E_{t + 1}} - (1 - {\theta _t}){E_t}} \right]/{\theta _t} \le \left\{ {f({{\bar x}_{t + 1}}) - f(x) - (1 - {\theta _t})\left[ {f({{\bar x}_t}) - f(x)} \right]} \right\}/{\theta _t}\\
+ \left\langle {A{x_{t + 1}},y} \right\rangle  - \left\langle {Ax,{y_{t + 1}}} \right\rangle  + g({y_{t + 1}}) - g(y)
\end{array} \label{eq2.9}
\end{equation}
\end{prop}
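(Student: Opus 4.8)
The plan is to verify the inequality by a direct, term-by-term expansion of the combination $E_{t+1} - (1-\theta_t)E_t$, exploiting the fact that the aggregation points $\bar x_{t+1}$ and $\bar y_{t+1}$ are \emph{affine} convex combinations of the previous aggregates and the current iterates, as prescribed by (\ref{eq2.3}) and (\ref{eq2.6}). First I would substitute the definition (\ref{eq2.8}) into both $E_{t+1}$ and $E_t$ and split the difference into four groups: the primal objective terms $f(\bar x_{t+1}) - (1-\theta_t)f(\bar x_t)$ together with the $f(x)$ contribution, the bilinear block $\langle A\bar x_{t+1}, y\rangle - (1-\theta_t)\langle A\bar x_t, y\rangle$, the bilinear block $-\langle Ax, \bar y_{t+1}\rangle + (1-\theta_t)\langle Ax, \bar y_t\rangle$, and the dual objective block involving $g$.

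The next step is to collapse the two bilinear blocks. Because the inner products are linear in their averaged arguments, substituting $\bar x_{t+1} = (1-\theta_t)\bar x_t + \theta_t x_{t+1}$ from (\ref{eq2.3}) makes the $(1-\theta_t)\langle A\bar x_t, y\rangle$ part cancel exactly, leaving $\theta_t\langle Ax_{t+1}, y\rangle$; symmetrically, (\ref{eq2.6}) turns the second block into $-\theta_t\langle Ax, y_{t+1}\rangle$. The two $-g(y)$ contributions combine linearly into $-\theta_t g(y)$. All of these are exact equalities requiring only the affine structure of the updates, and after dividing by $\theta_t$ they reproduce the terms $\langle Ax_{t+1}, y\rangle - \langle Ax, y_{t+1}\rangle - g(y)$ on the right-hand side, while the primal group reproduces the bracketed $f$-term.

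The single place where I expect to pass from equality to inequality — and the crux of the argument — is the dual block $g(\bar y_{t+1}) - (1-\theta_t)g(\bar y_t)$. Here I would invoke convexity of $g$ applied to the convex combination (\ref{eq2.6}), namely $g(\bar y_{t+1}) = g((1-\theta_t)\bar y_t + \theta_t y_{t+1}) \le (1-\theta_t)g(\bar y_t) + \theta_t g(y_{t+1})$, which bounds this block by $\theta_t g(y_{t+1})$ and contributes $g(y_{t+1})$ after division. This is the only analytic ingredient beyond algebra; the hypothesis $\theta_t \in [0,1]$ guarantees the coefficients are genuinely those of a convex combination so Jensen's inequality applies, while the division by $\theta_t$ is legitimate on the range $(0,1]$ where the iteration is actually run (the boundary $\theta_t = 0$ being degenerate).

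Collecting the exact equalities for the primal, bilinear, and $-g(y)$ terms together with this single convexity bound for the $g(\bar y_{t+1})$ term, and dividing through by $\theta_t$, yields (\ref{eq2.9}) directly. Accordingly, the main effort is careful bookkeeping of the six constituent terms rather than any substantive estimate, and the decisive observation is simply that every averaging step in the scheme is affine, so convexity of $g$ is the only inequality invoked.
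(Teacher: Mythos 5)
Your proposal is correct and follows essentially the same route as the paper's own proof: expand $E_{t+1}-(1-\theta_t)E_t$ via (\ref{eq2.8}), collapse the bilinear blocks exactly using the affine updates (\ref{eq2.3}) and (\ref{eq2.6}), apply convexity of $g$ to $g(\bar y_{t+1})$ as the sole inequality, and divide by $\theta_t$. Your remark that $\theta_t=0$ is degenerate and division requires $\theta_t\in(0,1]$ is a small point of care the paper glosses over, but it does not change the argument.
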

\begin{proof}
Using definition in (\ref{eq2.8}), we have
\begin{equation}	
\begin{array}{l}
{E_{t + 1}} - (1 - {\theta _t}){E_t}\\
= f({{\bar x}_{t + 1}}) - f(x){\rm{ + }}\left\langle {A{{\bar x}_{t + 1}},y} \right\rangle  - \left\langle {Ax,{{\bar y}_{t + 1}}} \right\rangle  - \left[ {g(y) - g({{\bar y}_{t + 1}})} \right]\\
- (1 - {\theta _t})\left[ {f({{\bar x}_t}) - f(x){\rm{ + }}\left\langle {A{{\bar x}_t},y} \right\rangle  - \left\langle {Ax,{{\bar y}_t}} \right\rangle  - \left[ {g(y) - g({{\bar y}_t})} \right]} \right]\\
= f({{\bar x}_{t + 1}}) - f(x) - (1 - {\theta _t})\left[ {f({{\bar x}_t}) - f(x)} \right]\\
+ \left\langle {A{{\bar x}_{t + 1}},y} \right\rangle  - \left\langle {Ax,{{\bar y}_{t + 1}}} \right\rangle  - (1 - {\theta _t})[\left\langle {A{{\bar x}_t},y} \right\rangle  - \left\langle {Ax,{{\bar y}_t}} \right\rangle ]\\
- \left[ {g(y) - g({{\bar y}_{t + 1}})} \right] - (1 - {\theta _t})\left[ { - \left[ {g(y) - g({{\bar y}_t})} \right]} \right]
\end{array}\label{eq2.10}
\end{equation}
Using (\ref{eq2.3}) and (\ref{eq2.6}), we have
\begin{equation}
\begin{array}{l}
\left\langle {A{{\bar x}_{t + 1}},y} \right\rangle  - \left\langle {Ax,{{\bar y}_{t + 1}}} \right\rangle  - (1 - {\theta _t})\left( {\left\langle {A{{\bar x}_t},y} \right\rangle  - \left\langle {Ax,{{\bar y}_t}} \right\rangle } \right)\\
= {\theta _t}\left[ {\left\langle {A{x_{t + 1}},y} \right\rangle  - \left\langle {Ax,{y_{t + 1}}} \right\rangle } \right]
\end{array}\label{eq2.11}
\end{equation}
\begin{equation}
\begin{array}{l}
- \left[ {g(y) - g({{\bar y}_{t + 1}})} \right] - (1 - {\theta _t})\left( { - \left[ {g(y) - g({{\bar y}_t})} \right]} \right)\\
=  - {\theta _t}g(y) + g({{\bar y}_{t + 1}}) - (1 - {\theta _t})g({{\bar y}_t})\\
=  - {\theta _t}g(y) + g((1 - {\theta _t}){{\bar y}_t} + {\theta _t}{y_{t + 1}}) - (1 - {\theta _t})g({{\bar y}_t})\\
\le  - {\theta _t}g(y) + (1 - {\theta _t})g({{\bar y}_t}) + {\theta _t}g({y_{t + 1}}) - (1 - {\theta _t})g({{\bar y}_t})\\
= {\theta _t}(g({y_{t + 1}}) - g(y))
\end{array}  \label{eq2.12}
\end{equation}
where the second equality follows from (\ref{eq2.6}) and the inequality from convexity of $g(y)$.\\
Substituting (\ref{eq2.11}) and (\ref{eq2.12}) into (\ref{eq2.10}), we have
\begin{equation}
\begin{array}{l}
{E_{t + 1}} - (1 - {\theta _t}){E_t} \le f({{\bar x}_{t + 1}}) - f(x) - (1 - {\theta _t})\left[ {f({{\bar x}_t}) - f(x)} \right]\\
+ {\theta _t}\left[ {\left\langle {A{x_{t + 1}},y} \right\rangle  - \left\langle {Ax,{y_{t + 1}}} \right\rangle } \right] + {\theta _t}(g({y_{t + 1}}) - g(y))
\end{array} \label{eq2.13}
\end{equation}
Dividing both sides of above inequality by ${\theta _t}$, we have (\ref{eq2.9}).

\end{proof}

\begin{prop}
 Setting ${\theta _t} = 2/(t + 1)$ in LDPD, it holds that
\begin{equation}
{\bar x_{k + 1}} = \sum\limits_{t = 1}^k {(t \cdot {x_{t + 1}})} /\sum\limits_{t = 1}^k t \begin{array}{*{20}{c}}
,&{}
\end{array}{\bar y_{k + 1}} = \sum\limits_{t = 1}^k {(t \cdot {y_{t + 1}})} /\sum\limits_{t = 1}^k t  \label{eq2.14}
\end{equation}

\begin{proof}
The result can be found in \cite{Chen2013Optimal}, below is the simple proof.  In view of the similarity of (\ref{eq2.3}) and (\ref{eq2.6}), we only need to prove the above left equality.

For $k=1$ , we need to show 
${\bar x_2} = \sum\limits_{t = 1}^1 {(t \cdot {x_{t + 1}})} /\sum\limits_{t = 1}^1 t  = {x_2}$ , which is verified by plugging  $t=1$ into  
${\bar x_{t + 1}} = (1 - {\theta _t}){\bar x_t} + {\theta _t}{x_{t + 1}}$
(\ref{eq2.3}) with ${\theta _1} = 1$. Suppose (\ref{eq2.14}) holds for $k=N$, we need to confirm it holds for $k=N+1$, which demands
\begin{equation}
{\bar x_{N + 2}} = \sum\limits_{t = 1}^{N + 1} {(t \cdot {x_{t + 1}})} /\sum\limits_{t = 1}^{N + 1} t  = \frac{{1{x_2} + 2{x_3} + N{x_{N + 1}} + (N + 1){x_{N + 2}}}}{{(N + 1)(N + 2)/2}}\label{eq2.15}
\end{equation}
We already have
${\bar x_{N + 1}} = \sum\limits_{t = 1}^N {(t \cdot {x_{t + 1}})} /\sum\limits_{t = 1}^N t $ , using (\ref{eq2.3}), we have
\[\begin{array}{l}
{{\bar x}_{N + 2}} = (1 - {\theta _{N + 1}}){{\bar x}_{N + 1}} + {\theta _{N + 1}}{x_{N + 2}} = \frac{N}{{N + 2}}{{\bar x}_{N + 1}} + \frac{2}{{N + 2}}{x_{N + 2}}\\
= \frac{N}{{N + 2}}\left[ {\frac{{1{x_2} + 2{x_3} + .. + N{x_{N + 1}}}}{{1 + 2 + .. + N}}} \right] + \frac{2}{{N + 2}}{x_{N + 2}} = \frac{{1{x_2} + 2{x_3} + .. + N{x_{N + 1}} + (N + 1){x_{N + 2}}}}{{(N + 2)(N + 1)/2}}
\end{array}\]
which proves (\ref{eq2.15}), hence finish proof of (\ref{eq2.14}).

\end{proof}

\end{prop}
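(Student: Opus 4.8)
The plan is to prove the identity by induction on $k$, first exploiting the structural symmetry between the two recursions (\ref{eq2.3}) and (\ref{eq2.6}). Since $\bar y_{t+1} = (1-\theta_t)\bar y_t + \theta_t y_{t+1}$ has exactly the same form as the recursion for $\bar x_{t+1}$, establishing the left-hand identity in (\ref{eq2.14}) automatically yields the right-hand one upon replacing $x$ by $y$ throughout. So I would state this reduction at the outset and then focus entirely on the $\bar x$ sequence.

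Before starting the induction, I would record the two arithmetic facts that drive the whole computation: with $\theta_t = 2/(t+1)$ one has $1-\theta_t = (t-1)/(t+1)$, and the denominator in (\ref{eq2.14}) is the triangular number $\sum_{t=1}^k t = k(k+1)/2$. These are the only ingredients needed beyond the recursion itself.

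For the base case $k=1$, I would note that $\theta_1 = 1$ forces $1-\theta_1 = 0$, so (\ref{eq2.3}) collapses to $\bar x_2 = x_2$, matching the claimed formula $\sum_{t=1}^1 t\,x_{t+1}/\sum_{t=1}^1 t = x_2$. For the inductive step, assuming the formula holds at level $k$, I would substitute $\theta_{k+1} = 2/(k+2)$ into (\ref{eq2.3}) to write $\bar x_{k+2} = \frac{k}{k+2}\bar x_{k+1} + \frac{2}{k+2}x_{k+2}$, then insert the inductive hypothesis for $\bar x_{k+1}$ with its denominator $k(k+1)/2$. The factor $k$ in $\frac{k}{k+2}$ cancels the $k$ in that denominator, so both terms acquire the common denominator $(k+1)(k+2)/2$; merging them, while recognizing that the new summand at index $t=k+1$ contributes $(k+1)x_{k+2}$, gives $\bar x_{k+2} = \sum_{t=1}^{k+1} t\,x_{t+1}/\bigl((k+1)(k+2)/2\bigr)$, which is exactly the claim at level $k+1$.

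The computation is entirely mechanical, so I do not anticipate any genuine obstacle; the only place to be careful is the fraction arithmetic in the inductive step, namely ensuring that the cancellation of $k$ is performed correctly and that the common denominator is rewritten consistently as the next triangular number $(k+1)(k+2)/2$, so that the appended term carries the coefficient $k+1$ and the telescoping of the sum is clean.
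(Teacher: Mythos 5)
Your proposal is correct and follows essentially the same route as the paper: reduce to the $\bar x$ identity by the symmetry of (\ref{eq2.3}) and (\ref{eq2.6}), verify the base case via $\theta_1=1$, and run the induction using $1-\theta_{k+1}=k/(k+2)$ so that the factor $k$ cancels against the triangular-number denominator $k(k+1)/2$. The fraction arithmetic in your inductive step matches the paper's computation exactly.
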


\newtheorem{lema}{Lemma}[section]
	
\section{Convergence Analysis for LDPD}
In this section, let us firstly establish some preliminary results, then develop convergence analysis for partial or full acceleration, which corresponds to weakly or strongly convex problem.

\subsection{ Preliminary Results}
\begin{lema}
	Let ${\mu _g} \ge 0$  denote strongly convexity parameter of function $g(y)$ . For iteration produced by LDPD, we have
	\begin{equation}
	\begin{array}{l}
	g({y_{t + 1}}) - g(y) \le \left\langle {y - {y_{t + 1}},{y_{t + 1}} - {y_t}} \right\rangle /{\tau _t}\\
	- {\mu _g}||y - {y_{t + 1}}|{|^2}/2 - \left\langle {A{x_{t + 1}},y - {y_{t + 1}}} \right\rangle 
	\end{array} \label{eq3.1}
	\end{equation}
\end{lema}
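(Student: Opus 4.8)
The plan is to read off the first-order optimality condition for the dual update (\ref{eq2.4}) and combine it with the strong convexity of $g$. Since $y_{t+1}$ minimizes the objective in (\ref{eq2.4}), which is the sum of $g$ and the quadratic proximal term, there exists a subgradient $s \in \partial g(y_{t+1})$ satisfying the stationarity relation
\[
\frac{1}{\tau_t}\left(y_{t+1} - y_t - \tau_t A x_{t+1}\right) + s = 0,
\]
so that $s = A x_{t+1} - (y_{t+1} - y_t)/\tau_t$. This is the only place the definition of the iterate $y_{t+1}$ enters the argument.

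Next I would invoke the $\mu_g$-strong convexity of $g$ at the point $y_{t+1}$, using this \emph{same} subgradient $s$: for every feasible $y$,
\[
g(y) \ge g(y_{t+1}) + \left\langle s, y - y_{t+1} \right\rangle + \frac{\mu_g}{2}\|y - y_{t+1}\|^2 .
\]
Rearranging this into an upper bound on $g(y_{t+1}) - g(y)$ and substituting the expression for $s$ found above gives exactly (\ref{eq3.1}), once I use the symmetry of the inner product to rewrite $\left\langle y_{t+1} - y_t,\, y - y_{t+1}\right\rangle$ as $\left\langle y - y_{t+1},\, y_{t+1} - y_t\right\rangle$. The $\mu_g$ term is carried along untouched from the strong convexity inequality, and the $A x_{t+1}$ term and the $(y_{t+1}-y_t)/\tau_t$ term split off from $\langle s, y-y_{t+1}\rangle$.

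The one point requiring care is the nonsmoothness of $g$: the optimality condition must be stated with a subgradient rather than a gradient, and that subgradient must be reused verbatim in the strong convexity inequality so the linear cross-terms line up correctly. I expect this sign-tracking to be the only delicate part; no auxiliary estimate or additional lemma is needed, and the rest is a direct substitution.
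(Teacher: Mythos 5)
Your proposal is correct and follows essentially the same route as the paper: extract the subgradient from the optimality condition of the dual update (\ref{eq2.4}), plug it into the $\mu_g$-strong-convexity inequality for $g$ at $y_{t+1}$, and rearrange. Your version is in fact slightly more careful than the paper's, since you name the specific subgradient $s$ and reuse it verbatim, whereas the paper treats $\partial g(y_{t+1})$ notationally as a single vector; the substance is identical.
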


\begin{proof}
Using sub-differential operator, optimal condition of (\ref{eq2.4}) can be written as 	
\begin{equation}
0 \in (1/{\tau _t})({y_{t + 1}} - {y_t} - {\tau _t}A{x_{t + 1}}) + \partial g({y_{t + 1}})
\label{eq3.2}
\end{equation}
where $\partial g({y_{t + 1}})$ denotes a subgradient of $g(y)$ at ${y_{t + 1}}$.
Strongly convexity of $g(y)$  implies
\begin{equation}
g(y) - g({y_{t + 1}}) \ge \left\langle {y - {y_{t + 1}},\partial g({y_{t + 1}})} \right\rangle  + {\mu _g}||y - {y_{t + 1}}|{|^2}/2
\label{eq3.3}
\end{equation}
Substituting (\ref{eq3.2}) into (\ref{eq3.3}), we have
\begin{equation}
g(y) - g({y_{t + 1}}) \ge \left\langle {{y_{t + 1}} - y,({y_{t + 1}} - {y_t} - {\tau _t}A{x_{t + 1}})/{\tau _t}} \right\rangle  + {\mu _g}||y - {y_{t + 1}}|{|^2}/2
\end{equation}
Which leads to (\ref{eq3.1}).
\end{proof}

\begin{lema}
	Suppose function $f(x)$  is strongly convex with strongly convexity parameter 
	${\mu _f} \ge 0$  and has continuous gradient with lipschitz constant ${L_f}$ , for iteration in LDPD, we have
	\begin{equation}
	\begin{array}{l}
	f({{\bar x}_{t + 1}}) - f(x) - (1 - {\theta _t})\left[ {f({{\bar x}_t}) - f(x)} \right]\\
	\le {\theta _t}\left\langle {\nabla f({{\hat x}_t}),{x_{t + 1}} - x} \right\rangle  + {L_f}\theta _t^2||{x_{t + 1}} - {x_t}|{|^2}/2 - {\mu _f}\theta _t^2||x - {x_t}|{|^2}/2
	\end{array}
	\label{eq3.5}
   \end{equation}
\end{lema}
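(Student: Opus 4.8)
The plan is to bound the left-hand side of (\ref{eq3.5}) by combining the descent inequality coming from the $L_f$-Lipschitz gradient with the $\mu_f$-strong-convexity lower bound of $f$, both anchored at the extrapolation point $\hat{x}_t$. First I would record the displacement identities that drive the whole argument: subtracting (\ref{eq2.1}) from (\ref{eq2.3}) gives $\bar{x}_{t+1} - \hat{x}_t = \theta_t(x_{t+1} - x_t)$, and from (\ref{eq2.1}) alone $\bar{x}_t - \hat{x}_t = \theta_t(\bar{x}_t - x_t)$. I would also rewrite the left-hand side in the more convenient form $f(\bar{x}_{t+1}) - (1-\theta_t)f(\bar{x}_t) - \theta_t f(x)$, using $1 = (1-\theta_t) + \theta_t$.

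Next I would apply the descent inequality $f(\bar{x}_{t+1}) \le f(\hat{x}_t) + \langle \nabla f(\hat{x}_t), \bar{x}_{t+1} - \hat{x}_t\rangle + (L_f/2)\|\bar{x}_{t+1} - \hat{x}_t\|^2$ and substitute $\bar{x}_{t+1} - \hat{x}_t = \theta_t(x_{t+1} - x_t)$; this immediately produces the Lipschitz term $L_f\theta_t^2\|x_{t+1}-x_t\|^2/2$ together with a linear term $\theta_t\langle\nabla f(\hat{x}_t), x_{t+1}-x_t\rangle$. To control the remaining $f(\hat{x}_t) - (1-\theta_t)f(\bar{x}_t) - \theta_t f(x)$, I would invoke the strong-convexity lower bound of $f$ at $\hat{x}_t$ evaluated at both $\bar{x}_t$ and $x$, weighted by $(1-\theta_t)$ and $\theta_t$ respectively; the three copies of $f(\hat{x}_t)$ cancel because the weights sum to one, leaving only linear and squared-distance contributions.

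The two remaining steps are careful bookkeeping. For the linear terms I would collect $\theta_t\langle\nabla f(\hat{x}_t), x_{t+1}-x_t\rangle - (1-\theta_t)\langle\nabla f(\hat{x}_t), \bar{x}_t - \hat{x}_t\rangle - \theta_t\langle\nabla f(\hat{x}_t), x - \hat{x}_t\rangle$ and, after substituting $\bar{x}_t - \hat{x}_t = \theta_t(\bar{x}_t - x_t)$ together with $\hat{x}_t = (1-\theta_t)\bar{x}_t + \theta_t x_t$, verify that the arguments telescope to exactly $\theta_t(x_{t+1}-x)$, matching the target's first term. For the quadratic terms I expect the main obstacle: the strong-convexity step naturally yields $-\mu_f[(1-\theta_t)\|\bar{x}_t-\hat{x}_t\|^2 + \theta_t\|x-\hat{x}_t\|^2]/2$, whereas the claim wants the cleaner $-\mu_f\theta_t^2\|x-x_t\|^2/2$. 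I would close this gap with the algebraic identity $(1-\theta_t)\|\bar{x}_t-\hat{x}_t\|^2 + \theta_t\|x-\hat{x}_t\|^2 = \theta_t^2\|x-x_t\|^2 + \theta_t(1-\theta_t)\|x-\bar{x}_t\|^2$, obtained by expanding with $\bar{x}_t-\hat{x}_t = \theta_t(\bar{x}_t-x_t)$ and $x-\hat{x}_t = (x-x_t) - (1-\theta_t)(\bar{x}_t-x_t)$; since $\theta_t\in(0,1]$ the extra term $\theta_t(1-\theta_t)\|x-\bar{x}_t\|^2/2$ is nonnegative and may simply be discarded, which only strengthens the bound and recovers $-\mu_f\theta_t^2\|x-x_t\|^2/2$. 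Assembling the linear, Lipschitz and strong-convexity pieces then gives (\ref{eq3.5}). Note the argument is uniform in $\mu_f\ge 0$, so the merely convex case $\mu_f=0$ is included automatically.
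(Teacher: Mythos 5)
Your proposal is correct, and its skeleton coincides with the paper's proof: both anchor the descent inequality (\ref{eq3.7}) and the strong-convexity inequality (\ref{eq3.6}) at the extrapolation point $\hat{x}_t$, and both rely on the displacement identities $\bar{x}_{t+1} - \hat{x}_t = \theta_t(x_{t+1}-x_t)$ and $\hat x_t = (1-\theta_t)\bar x_t + \theta_t x_t$ coming from (\ref{eq2.1}) and (\ref{eq2.3}). The one place where you genuinely diverge is the treatment of the convex combination. The paper evaluates the strong-convexity bound at the single auxiliary point $z = (1-\theta_t)\bar{x}_t + \theta_t x$, for which $z - \hat{x}_t = \theta_t(x - x_t)$ holds exactly, so the quadratic penalty $-\mu_f\theta_t^2\|x - x_t\|^2/2$ appears with no further work, and the value $f(z)$ is then split by ordinary convexity of $f$. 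You instead apply strong convexity twice, at $\bar{x}_t$ and at $x$, with Jensen weights $(1-\theta_t)$ and $\theta_t$; the linear terms telescope to $\theta_t\langle \nabla f(\hat x_t), x_{t+1}-x\rangle$ exactly as in the paper, but your quadratic terms then require the identity $(1-\theta_t)\|\bar{x}_t - \hat{x}_t\|^2 + \theta_t\|x - \hat{x}_t\|^2 = \theta_t^2\|x - x_t\|^2 + \theta_t(1-\theta_t)\|x - \bar{x}_t\|^2$ (which I have checked and is correct) followed by discarding the nonnegative surplus. The paper's single-point choice buys brevity: there is no identity to verify and nothing to throw away. Your two-point version buys a more transparent accounting: it makes visible the slack $\mu_f\theta_t(1-\theta_t)\|x - \bar{x}_t\|^2/2$ that the bound (\ref{eq3.5}) gives up, whereas the paper's use of plain convexity in the Jensen step silently gives up exactly the same amount, so the two routes are equivalent in tightness.
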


  \begin{proof}
  	Since $f(z)$  is strongly convex with parameter ${\mu _f} \ge 0$  , we have
  	\begin{equation}
    f({\hat x_t}) - f(z) \le \left\langle {\nabla f({{\hat x}_t}),{{\hat x}_t} - z} \right\rangle  - {\mu _f}||z - {\hat x_t}|{|^2}/2
    \label{eq3.6}
  	\end{equation}

   The Lipschitz continuousness of gradient of $f(z)$  implies
   \begin{equation}
   f({\bar x_{t + 1}}) - f({\hat x_t}) \le \left\langle {\nabla f({{\hat x}_t}),{{\bar x}_{t + 1}} - {{\hat x}_t}} \right\rangle  + {L_f}||{\bar x_{t + 1}} - {\hat x_t}|{|^2}/2
   \label{eq3.7}
   \end{equation}
   Summing up (\ref{eq3.6}) and (\ref{eq3.7}) leads to
   \begin{equation}
   f({\bar x_{t + 1}}) \le f(z) + \left\langle {\nabla f({{\hat x}_t}),{{\bar x}_{t + 1}} - z} \right\rangle  + {L_f}||{\bar x_{t + 1}} - {\hat x_t}|{|^2}/2 - {\mu _f}||z - {\hat x_t}|{|^2}/2
   \end{equation}
   Since above inequality holds for any $z$ , letting $z = (1 - {\theta _t}){\bar x_t} + {\theta _t}x$  , we have
   \[\begin{array}{l}
   f({{\bar x}_{t + 1}}) \le f((1 - {\theta _t}){{\bar x}_t} + {\theta _t}x) + \left\langle {\nabla f({{\hat x}_t}),{{\bar x}_{t + 1}} - [(1 - {\theta _t}){{\bar x}_t} + {\theta _t}x]} \right\rangle \\
   + {L_f}||{{\bar x}_{t + 1}} - {{\hat x}_t}|{|^2}/2 - {\mu _f}||(1 - {\theta _t}){{\bar x}_t} + {\theta _t}x - {{\hat x}_t}|{|^2}/2\\
   \le (1 - {\theta _t})f({{\bar x}_t}) + {\theta _t}f(x) + \left\langle {\nabla f({{\hat x}_t}),{\theta _t}{x_{t + 1}} - {\theta _t}x} \right\rangle  + {L_f}||{{\bar x}_{t + 1}} - {{\hat x}_t}|{|^2}/2 - {\mu _f}||{\theta _t}x - {\theta _t}{x_t}|{|^2}/2
   \end{array}\]
   Where the inequality comes from convexity of $f()$ and (\ref{eq2.1}),(\ref{eq2.3}), the above inequality leads to (\ref{eq3.5}) immediately with
   ${\bar x_{t + 1}} - {\hat x_t} = {\theta _t}({x_{t + 1}} - {x_t})$.
  \end{proof}

\begin{prop}
 For iterations in LDPD, we have
 \begin{equation}
 \begin{array}{l}
 \left\{ {f({{\bar x}_{t + 1}}) - f(x) - (1 - {\theta _t})\left[ {f({{\bar x}_t}) - f(x)} \right]} \right\}/{\theta _t} \le \left\langle {x - {x_{t + 1}},{x_{t + 1}} - {x_t}} \right\rangle /{\eta _t}\\
 + {L_f}\theta _t^{}||{x_{t + 1}} - {x_t}|{|^2}/2 - {\mu _f}\theta _t^{}||x - {x_t}|{|^2}/2 + \left\langle {Ax - A{x_{t + 1}},{{\hat y}_t}} \right\rangle 
  \end{array}
 \label{eq3.9}
 \end{equation}	
\vspace{-5mm}
 \begin{proof}
Using variation inequality, the optimal condition of (\ref{eq2.2}) can be written as
\[\left\langle {x - {x_{t + 1}},{x_{t + 1}} - {x_t} + {\eta _t}(\nabla f({{\hat x}_t}) + {A^T}{{\hat y}_t})} \right\rangle  \ge 0\]
which means
\[\left\langle {x - {x_{t + 1}},{x_{t + 1}} - {x_t} + {\eta _t}{A^T}{{\hat y}_t}} \right\rangle /{\eta _t} \ge \left\langle {{x_{t + 1}} - x,\nabla f({{\hat x}_t})} \right\rangle \]
Substituting above inequality into (\ref{eq3.5}) leads to 
\[\begin{array}{l}
f({{\bar x}_{t + 1}}) - f(x) - (1 - {\theta _t})\left[ {f({{\bar x}_t}) - f(x)} \right] \le {L_f}\theta _t^2||{x_{t + 1}} - {x_t}|{|^2}/2 - {\mu _f}\theta _t^2||x - {x_t}|{|^2}/2\\
+ \left\langle {x - {x_{t + 1}},{x_{t + 1}} - {x_t}} \right\rangle {\theta _t}/{\eta _t} + {\theta _t}\left\langle {x - {x_{t + 1}},{A^T}{{\hat y}_t}} \right\rangle 
\end{array}\]
Dividing Both sides of above inequality by ${\theta _t}$ , we have (\ref{eq3.9}).
\end{proof}	
\end{prop}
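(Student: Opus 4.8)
The plan is to combine the descent estimate of Lemma 3.2, inequality (\ref{eq3.5}), with the first-order optimality condition of the primal subproblem (\ref{eq2.2}), and then rescale by $\theta_t$. Inspecting (\ref{eq3.5}), every term on its right-hand side already matches a term of the target (\ref{eq3.9}) up to a factor of $\theta_t$, \emph{except} the linear term $\theta_t\langle\nabla f(\hat x_t),x_{t+1}-x\rangle$. So the whole task reduces to replacing this gradient term, and the natural tool for that is the optimality condition of the $x$-update, which is exactly where $\nabla f(\hat x_t)$ enters the iteration.

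First I would record the optimality condition for (\ref{eq2.2}). Since $x_{t+1}$ minimizes the quadratic $\|x-[x_t-\eta_t(\nabla f(\hat x_t)+A^T\hat y_t)]\|^2$ over the feasible set, the standard projection/variational inequality gives, for every admissible $x$,
\[
\langle x-x_{t+1},\; x_{t+1}-x_t+\eta_t(\nabla f(\hat x_t)+A^T\hat y_t)\rangle \ge 0 .
\]
Isolating the gradient contribution and dividing by $\eta_t$ turns this into the bound
\[
\langle x_{t+1}-x,\;\nabla f(\hat x_t)\rangle \le \langle x-x_{t+1},\;x_{t+1}-x_t\rangle/\eta_t + \langle x-x_{t+1},\;A^T\hat y_t\rangle ,
\]
which is precisely the inequality needed to eliminate the troublesome linear term.

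Next I would multiply this bound by $\theta_t$ and substitute it for $\theta_t\langle\nabla f(\hat x_t),x_{t+1}-x\rangle$ in (\ref{eq3.5}). The result is an upper bound for $f(\bar x_{t+1})-f(x)-(1-\theta_t)[f(\bar x_t)-f(x)]$ whose right-hand side contains a $\theta_t$-scaled inner-product term, a $\theta_t$-scaled bilinear term, and the two squared-norm terms carrying the factor $\theta_t^2$. Dividing every term by $\theta_t$ then delivers (\ref{eq3.9}): the inner-product and bilinear terms lose their explicit $\theta_t$, the Lipschitz and strong-convexity terms drop from $\theta_t^2$ to $\theta_t$, and the adjoint identity $\langle x-x_{t+1},A^T\hat y_t\rangle=\langle Ax-Ax_{t+1},\hat y_t\rangle$ casts the final term in the stated form.

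The argument is essentially a single substitution followed by a rescaling, so there is no genuinely difficult step. The only thing to watch is the bookkeeping of the powers of $\theta_t$: the linear and bilinear contributions enter (\ref{eq3.5}) with a single $\theta_t$ while the Lipschitz and strong-convexity contributions enter with $\theta_t^2$, and it is exactly the concluding division by $\theta_t$ that reconciles these and reproduces the coefficients appearing in (\ref{eq3.9}).
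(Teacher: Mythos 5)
Your proposal is correct and follows essentially the same route as the paper's own proof: the variational-inequality optimality condition of the primal update (\ref{eq2.2}) is used to bound the gradient term $\langle\nabla f(\hat x_t),x_{t+1}-x\rangle$, this bound is substituted into (\ref{eq3.5}), and a final division by $\theta_t$ together with the adjoint identity $\langle x-x_{t+1},A^T\hat y_t\rangle=\langle Ax-Ax_{t+1},\hat y_t\rangle$ yields (\ref{eq3.9}). The bookkeeping of the $\theta_t$ versus $\theta_t^2$ factors that you highlight is exactly how the paper's coefficients arise as well.
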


\begin{prop}It holds for iteration in LDPD that
\begin{equation}
\begin{array}{l}
\left[ {{E_{t + 1}} - (1 - {\theta _t}){E_t}} \right]/{\theta _t} \le \left\langle {x - {x_{t + 1}},{x_{t + 1}} - {x_t}} \right\rangle /{\eta _t} + {L_f}\theta _t^{}||{x_{t + 1}} - {x_t}|{|^2}/2\\
- {\mu _f}\theta _t^{}||x - {x_t}|{|^2}/2 + \left\langle {y - {y_{t + 1}},{y_{t + 1}} - {y_t}} \right\rangle /{\tau _t} - {\mu _g}||y - {y_{t + 1}}|{|^2}/2\\
+ \left\langle {Ax - A{x_{t + 1}},{y_t} - {y_{t + 1}}} \right\rangle  - \left\langle {Ax - A{x_t},{y_{t - 1}} - {y_t}} \right\rangle {\alpha _t} + \left\langle {A{x_t} - A{x_{t + 1}},{y_t} - {y_{t - 1}}} \right\rangle {\alpha _t}
\end{array}
\label{eq3.10}
\end{equation}	
\end{prop}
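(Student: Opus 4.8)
The plan is to assemble inequality (\ref{eq3.10}) by chaining together the three preliminary estimates already proved—Proposition 2.1 (inequality (\ref{eq2.9})), Proposition 3.3 (inequality (\ref{eq3.9})), and Lemma 3.1 (inequality (\ref{eq3.1}))—and then to reduce the resulting pile of bilinear terms to the three-inner-product form appearing on the right-hand side of (\ref{eq3.10}) via the dual extrapolation rule (\ref{eq2.5}). The quadratic and squared-norm terms will fall out verbatim; the only genuine work is the algebra on the $A$-terms.

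First I would start from (\ref{eq2.9}) and substitute (\ref{eq3.9}) for the quotient $\{f(\bar x_{t+1}) - f(x) - (1-\theta_t)[f(\bar x_t)-f(x)]\}/\theta_t$. This reproduces the three primal terms $\langle x - x_{t+1}, x_{t+1}-x_t\rangle/\eta_t$, $L_f\theta_t\|x_{t+1}-x_t\|^2/2$, and $-\mu_f\theta_t\|x-x_t\|^2/2$ exactly, together with one extra bilinear contribution $\langle Ax - Ax_{t+1},\hat y_t\rangle$. Next I would substitute Lemma 3.1 to bound the leftover term $g(y_{t+1})-g(y)$ in (\ref{eq2.9}), producing the two dual terms $\langle y - y_{t+1}, y_{t+1}-y_t\rangle/\tau_t$ and $-\mu_g\|y-y_{t+1}\|^2/2$, plus a further bilinear piece $-\langle Ax_{t+1}, y - y_{t+1}\rangle$. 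After these two substitutions every quadratic and squared-norm term of (\ref{eq3.10}) is already present, so what remains is purely to simplify the bilinear terms.

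Collecting everything that involves $A$ leaves $B=\langle Ax - Ax_{t+1},\hat y_t\rangle + \langle Ax_{t+1}, y\rangle - \langle Ax, y_{t+1}\rangle - \langle Ax_{t+1}, y - y_{t+1}\rangle$. The step I expect to matter most is observing that $\langle Ax_{t+1}, y\rangle - \langle Ax_{t+1}, y - y_{t+1}\rangle = \langle Ax_{t+1}, y_{t+1}\rangle$, after which the remaining terms collapse neatly into the single compact expression $B=\langle Ax - Ax_{t+1},\hat y_t - y_{t+1}\rangle$.

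The final and most delicate part is inserting the extrapolation formula (\ref{eq2.5}) evaluated at index $t$, namely $\hat y_t = y_t + \alpha_t(y_t - y_{t-1})$, so that $\hat y_t - y_{t+1} = (y_t - y_{t+1}) + \alpha_t(y_t - y_{t-1})$. The main obstacle—really the only place where sign bookkeeping must be done carefully—is rewriting the $\alpha_t$ part into the exact two-term shape of (\ref{eq3.10}): I would split $Ax - Ax_{t+1} = (Ax - Ax_t) + (Ax_t - Ax_{t+1})$ inside $\alpha_t\langle Ax - Ax_{t+1}, y_t - y_{t-1}\rangle$ and then flip the sign via $y_t - y_{t-1} = -(y_{t-1} - y_t)$ in the first resulting term. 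This yields precisely $-\alpha_t\langle Ax - Ax_t, y_{t-1}-y_t\rangle + \alpha_t\langle Ax_t - Ax_{t+1}, y_t - y_{t-1}\rangle$, which together with the leading term $\langle Ax - Ax_{t+1}, y_t - y_{t+1}\rangle$ reconstitutes the right-hand side of (\ref{eq3.10}) exactly, completing the proof.
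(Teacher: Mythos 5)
Your proposal is correct and follows essentially the same route as the paper's own proof: substituting (\ref{eq3.9}) and (\ref{eq3.1}) into (\ref{eq2.9}), collapsing the bilinear terms to $\left\langle {Ax - A{x_{t + 1}},{{\hat y}_t} - {y_{t + 1}}} \right\rangle$, and then expanding via the extrapolation rule (\ref{eq2.5}) with the split $Ax - A{x_{t + 1}} = (Ax - A{x_t}) + (A{x_t} - A{x_{t + 1}})$. The sign bookkeeping you describe matches the paper's equality (\ref{eq3.12}) exactly, so no gap remains.
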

\begin{proof}
 Substituting (\ref{eq3.9}) and (\ref{eq3.1}) into (\ref{eq2.9}), we have
 \begin{equation}
\begin{array}{l}
 \left[ {{E_{t + 1}} - (1 - {\theta _t}){E_t}} \right]/{\theta _t} \le \left\langle {x - {x_{t + 1}},{x_{t + 1}} - {x_t}} \right\rangle /{\eta _t}\\
 + {L_f}\theta _t^{}||{x_{t + 1}} - {x_t}|{|^2}/2 - {\mu _f}\theta _t^{}||x - {x_t}|{|^2}/2 + \left\langle {Ax - A{x_{t + 1}},{{\hat y}_t}} \right\rangle \\
 + \left\langle {A{x_{t + 1}},y} \right\rangle  - \left\langle {Ax,{y_{t + 1}}} \right\rangle  + \left\langle {y - {y_{t + 1}},{y_{t + 1}} - {y_t}} \right\rangle /{\tau _t}\\
 - {\mu _g}||y - {y_{t + 1}}|{|^2}/2 - \left\langle {A{x_{t + 1}},y - {y_{t + 1}}} \right\rangle 
 \end{array}
 \label{eq3.11}
 \end{equation}
 Noting the bilinear inner product term in above inequality, we see
\begin{equation}
\begin{array}{l}
\left\langle {Ax - A{x_{t + 1}},{{\hat y}_t}} \right\rangle  + \left\langle {A{x_{t + 1}},y} \right\rangle  - \left\langle {Ax,{y_{t + 1}}} \right\rangle  - \left\langle {A{x_{t + 1}},y - {y_{t + 1}}} \right\rangle \\
= \left\langle {Ax - A{x_{t + 1}},{{\hat y}_t}} \right\rangle  - \left\langle {Ax,{y_{t + 1}}} \right\rangle  + \left\langle {A{x_{t + 1}},{y_{t + 1}}} \right\rangle \\
= \left\langle {Ax - A{x_{t + 1}},{y_t} - {y_{t + 1}}} \right\rangle  + \left\langle {Ax - A{x_{t + 1}},({y_t} - {y_{t - 1}}){\alpha _t}} \right\rangle \\
= \left\langle {Ax - A{x_{t + 1}},{y_t} - {y_{t + 1}}} \right\rangle  - \left\langle {Ax - A{x_t},{y_{t - 1}} - {y_t}} \right\rangle {\alpha _t}\\
+ \left\langle {A{x_t} - A{x_{t + 1}},{y_t} - {y_{t - 1}}} \right\rangle {\alpha _t}
\end{array}
\label{eq3.12}
\end{equation}
where the above second equality comes from 
${\hat y_t} = ({y_t} - {y_{t - 1}}){\alpha _t} + {y_t}$  by (\ref{eq2.5}).
Substituting (\ref{eq3.12}) into (\ref{eq3.11}), we have (\ref{eq3.10}).
 
\end{proof}

\begin{lema}
Setting parameters of LDPD as
\begin{equation}
{\theta _t} = 2/(t + 1),\begin{array}{*{20}{c}}
{}
\end{array}{\alpha _t} = (t - 1)/t,\begin{array}{*{20}{c}}
{}
\end{array}(t + 1){\tau _{t + 1}} \ge t{\tau _t},\begin{array}{*{20}{c}}
{}
\end{array}1/{\eta _t} \ge 2{L_f}/(t + 1) + ||A|{|^2}{\tau _t}
\label{eq3.13}
\end{equation}
then we have
\begin{equation}
\begin{array}{l}
t(t + 1){E_{t + 1}} - t(t - 1){E_t} \le \left( {||x - {x_t}|{|^2} - ||x - {x_{t + 1}}|{|^2}} \right)t/{\eta _t} - t(t + 1)\frac{{{\mu _f}}}{2}||x - {x_t}|{|^2}\\
- t{\mu _g}||y - {y_{t + 1}}|{|^2} + \left( {||y - {y_t}|{|^2} - ||y - {y_{t + 1}}|{|^2} - ||{y_{t + 1}} - {y_t}|{|^2}} \right)\frac{t}{{{\tau _t}}}\\
+ 2t\left\langle {Ax - A{x_{t + 1}},{y_t} - {y_{t + 1}}} \right\rangle  - 2(t - 1)\left\langle {Ax - A{x_t},{y_{t - 1}} - {y_t}} \right\rangle  + \frac{{(t - 1)}}{{{\tau _{t - 1}}}}||{y_t} - {y_{t - 1}}|{|^2}
\end{array}
\label{eq3.14}
\end{equation}
\end{lema}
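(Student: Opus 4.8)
The plan is to convert the one-step estimate (\ref{eq3.10}) into the telescoping form (\ref{eq3.14}) by multiplying through by the weight $2t$ and then turning the inner-product terms into squared distances. With ${\theta_t}=2/(t+1)$ one has $1-{\theta_t}=(t-1)/(t+1)$, so that $2t\cdot[{E_{t+1}}-(1-{\theta_t}){E_t}]/{\theta_t}=t(t+1){E_{t+1}}-t(t-1){E_t}$, which is exactly the left-hand side of (\ref{eq3.14}). Hence I would multiply every term on the right of (\ref{eq3.10}) by $2t$ and then reduce each resulting term to the shape appearing in (\ref{eq3.14}).

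First I would treat the two proximal inner-product terms with the polarization identity $2\langle a,b\rangle=||a+b||^2-||a||^2-||b||^2$. Taking $a=x-{x_{t+1}}$, $b={x_{t+1}}-{x_t}$ gives $\tfrac{2t}{{\eta_t}}\langle x-{x_{t+1}},{x_{t+1}}-{x_t}\rangle=\tfrac{t}{{\eta_t}}\big(||x-{x_t}||^2-||x-{x_{t+1}}||^2-||{x_{t+1}}-{x_t}||^2\big)$, and the analogous identity on the $y$-term produces the block $\tfrac{t}{{\tau_t}}\big(||y-{y_t}||^2-||y-{y_{t+1}}||^2-||{y_{t+1}}-{y_t}||^2\big)$ of (\ref{eq3.14}). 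The $\mu_g$ term and the first cross term scale directly to $-t{\mu_g}||y-{y_{t+1}}||^2$ and $2t\langle Ax-A{x_{t+1}},{y_t}-{y_{t+1}}\rangle$, while inserting ${\alpha_t}=(t-1)/t$ into $-\langle Ax-A{x_t},{y_{t-1}}-{y_t}\rangle{\alpha_t}$ yields $-2(t-1)\langle Ax-A{x_t},{y_{t-1}}-{y_t}\rangle$. The ${\mu_f}$ contribution is carried over directly from the strong-convexity term of (\ref{eq3.9}), scaled by $2t$.

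The one term not already in the target shape is $2(t-1)\langle A{x_t}-A{x_{t+1}},{y_t}-{y_{t-1}}\rangle$, obtained from the last term of (\ref{eq3.10}) after substituting ${\alpha_t}=(t-1)/t$. Here I would apply Young's inequality with weight ${\tau_{t-1}}$, namely $2\langle A({x_t}-{x_{t+1}}),{y_t}-{y_{t-1}}\rangle\le {\tau_{t-1}}||A||^2||{x_{t+1}}-{x_t}||^2+\tfrac{1}{{\tau_{t-1}}}||{y_t}-{y_{t-1}}||^2$. Multiplying by $(t-1)$ produces exactly the term $\tfrac{t-1}{{\tau_{t-1}}}||{y_t}-{y_{t-1}}||^2$ of (\ref{eq3.14}), together with a spurious $(t-1){\tau_{t-1}}||A||^2||{x_{t+1}}-{x_t}||^2$ that must be absorbed.

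The main obstacle is this absorption step, which is where both stepsize conditions in (\ref{eq3.13}) enter. Collecting all coefficients of $||{x_{t+1}}-{x_t}||^2$ gives $-\tfrac{t}{{\eta_t}}+\tfrac{2t{L_f}}{t+1}+(t-1){\tau_{t-1}}||A||^2$, the first two coming from the polarization remainder and the ${L_f}$ term. The condition $1/{\eta_t}\ge 2{L_f}/(t+1)+||A||^2{\tau_t}$ bounds the first two by $-t||A||^2{\tau_t}$, so the total is at most $||A||^2\big[(t-1){\tau_{t-1}}-t{\tau_t}\big]$; the monotonicity condition $(t+1){\tau_{t+1}}\ge t{\tau_t}$, read at the shifted index as $t{\tau_t}\ge (t-1){\tau_{t-1}}$, makes this nonpositive. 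Since we are bounding from above, the entire $||{x_{t+1}}-{x_t}||^2$ contribution may be discarded, leaving precisely (\ref{eq3.14}). The delicate points are the choice of the Young weight ${\tau_{t-1}}$, so that the surviving $y$-difference term matches the stated coefficient exactly, and the index bookkeeping that pairs the remainder against the shifted stepsize condition.
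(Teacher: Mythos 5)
Your route is the same as the paper's: weight the one-step bound (\ref{eq3.10}) by $t$ (equivalently $2t$), expand the two proximal inner products by polarization, handle $\left\langle A{x_t}-A{x_{t+1}},{y_t}-{y_{t-1}}\right\rangle$ with Young's inequality at weight ${\tau_{t-1}}$, and discard the accumulated $||{x_{t+1}}-{x_t}||^2$ coefficient using $1/{\eta_t}\ge 2{L_f}/(t+1)+||A||^2{\tau_t}$ together with $t{\tau_t}\ge(t-1){\tau_{t-1}}$. The only cosmetic difference is that the paper converts $(t-1){\tau_{t-1}}$ to $t{\tau_t}$ inside the Young step, while you collect all coefficients and absorb them at the end; these are the same argument.

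However, one coefficient you glossed over is a genuine mismatch with the statement. In (\ref{eq3.10}) the strong-convexity term is $-{\mu_f}{\theta_t}||x-{x_t}||^2/2=-\frac{{\mu_f}}{t+1}||x-{x_t}||^2$, so multiplying by $2t$ gives $-\frac{2t}{t+1}{\mu_f}||x-{x_t}||^2$, not the $-\frac{t(t+1)}{2}{\mu_f}||x-{x_t}||^2$ claimed in (\ref{eq3.14}). For $t\ge 2$ the stated coefficient is strictly larger, so (\ref{eq3.14}) as written is strictly stronger than what your argument --- or any argument proceeding from (\ref{eq3.10}) --- delivers; your claim that this term is ``carried over directly'' is exactly where the proof does not close. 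You are in good company: the paper's own proof commits the same slip, writing $t{\mu_f}\frac{t+1}{2}||x-{x_t}||^2/2$ in (\ref{eq3.15}) where correct multiplication gives $t{\mu_f}\frac{2}{t+1}||x-{x_t}||^2/2$, i.e., it substitutes $1/{\theta_t}$ for ${\theta_t}$. The discrepancy is harmless downstream, since (\ref{eq3.14}) is invoked only in Theorems 3.1 and 3.2 where ${\mu_f}=0$ and the term vanishes, but a careful write-up should either state the lemma with the weaker coefficient $\frac{2t}{t+1}{\mu_f}$ or note explicitly that the ${\mu_f}$ term is never used.
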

\begin{proof}
Using ${\theta _t} = 2/(t + 1)$,${\alpha _t} = (t - 1)/t$ into (\ref{eq3.10}) and multiply both sides with $t$  yields
\begin{equation}
\begin{array}{l}
t{E_{t + 1}}\frac{{t + 1}}{2} - t{E_t}\frac{{t - 1}}{2} \le t\left\langle {x - {x_{t + 1}},{x_{t + 1}} - {x_t}} \right\rangle /{\eta _t}\\
+ {L_f}t\frac{2}{{t + 1}}||{x_{t + 1}} - {x_t}|{|^2}/2 - t{\mu _f}\frac{{t + 1}}{2}||x - {x_t}|{|^2}/2\\
+ t\left\langle {y - {y_{t + 1}},{y_{t + 1}} - {y_t}} \right\rangle /{\tau _t} - t{\mu _g}||y - {y_{t + 1}}|{|^2}/2\\
+ t\left\langle {Ax - A{x_{t + 1}},{y_t} - {y_{t + 1}}} \right\rangle  - \left\langle {Ax - A{x_t},{y_{t - 1}} - {y_t}} \right\rangle t{\alpha _t}\\
+ \left\langle {A{x_t} - A{x_{t + 1}},{y_t} - {y_{t - 1}}} \right\rangle t{\alpha _t}
\end{array}
\label{eq3.15}
\end{equation}
Using $t{\alpha _t} = (t - 1)$, we have	
\begin{equation}
\begin{array}{l}
\left\langle {A{x_t} - A{x_{t + 1}},{y_t} - {y_{t - 1}}} \right\rangle t{\alpha _t} = \left\langle {A{x_t} - A{x_{t + 1}},{y_t} - {y_{t - 1}}} \right\rangle (t - 1)\\
\le ||A{x_t} - A{x_{t + 1}}||||{y_t} - {y_{t - 1}}||(t - 1)\\
\le \left( {||A{x_t} - A{x_{t + 1}}|{|^2}{\tau _{t - 1}} + ||{y_t} - {y_{t - 1}}|{|^2}/{\tau _{t - 1}}} \right)(t - 1)/2\\
\le ||A|{|^2}||{x_t} - {x_{t + 1}}|{|^2}t{\tau _t}/2 + (t - 1)||{y_t} - {y_{t - 1}}|{|^2}/(2{\tau _{t - 1}})
\end{array}
\label{eq3.16}
\end{equation}

\noindent the above last inequality uses $(t - 1){\tau _{t - 1}} \le t{\tau _t}$  in (\ref{eq3.13}). Plugging (\ref{eq3.16}) into (\ref{eq3.15}) produces 
\[\begin{array}{l}
t{E_{t + 1}}\frac{{t + 1}}{2} - t{E_t}\frac{{t - 1}}{2} \le \left( {||x - {x_t}|{|^2} - ||x - {x_{t + 1}}|{|^2}} \right)\frac{t}{{2{\eta _t}}}\\
- \frac{t}{2}||{x_{t + 1}} - {x_t}|{|^2}(1/{\eta _t} - {L_f}\frac{2}{{t + 1}} - ||A|{|^2}{\tau _t}) - t{\mu _f}\frac{{t + 1}}{2}||x - {x_t}|{|^2}/2\\
- t{\mu _g}||y - {y_{t + 1}}|{|^2}/2 + \left( {||y - {y_t}|{|^2} - ||y - {y_{t + 1}}|{|^2} - ||{y_{t + 1}} - {y_t}|{|^2}} \right)t/(2{\tau _t})\\
+ t\left\langle {Ax - A{x_{t + 1}},{y_t} - {y_{t + 1}}} \right\rangle  - (t - 1)\left\langle {Ax - A{x_t},{y_{t - 1}} - {y_t}} \right\rangle  + (t - 1)||{y_t} - {y_{t - 1}}|{|^2}/(2{\tau _{t - 1}})
\end{array}\]
Using ${\eta _t} \ge {L_f}(t + 1)/2 + ||A|{|^2}{\tau _t}$ in (\ref{eq3.13}) leads to (\ref{eq3.14}).

\end{proof}

\subsection{ Partial Acceleration by Smooth Primal Component}\label{section3.2}
Here let us recover the state-of-the-art result with respect to partial acceleration \cite{Chen2013Optimal} by using smooth property of primal component for weakly convex problem. 
\begin{theorem}
 
For saddle point problem (\ref{eq1.1}), if both primal component $f(x)$ and dual component $g(y)$ are merely weakly convex, namely 
 ${\mu _f} = {\mu _g} = 0$ , letting $N$ denoting maximum iteration number and
 \begin{equation}
 {\theta _t} = \frac{2}{{t + 1}},\begin{array}{*{20}{c}}
 {}
 \end{array}{\alpha _t} = \frac{{t - 1}}{t},\begin{array}{*{20}{c}}
 {}
 \end{array}{\tau _t} = \frac{t}{N},\begin{array}{*{20}{c}}
 {}
 \end{array}{\eta _t} = \frac{t}{{2{L_f} + N||A|{|^2}}}
 \label{eq3.17}
 \end{equation}
 then it holds for LDPD that
 \begin{equation}
 {E_{N + 1}} \le \frac{{2{L_f}}}{{N(N + 1)}}||x - {x_1}|{|^2} + \frac{1}{{N + 1}}\left( {||A|{|^2}||x - {x_1}|{|^2} + ||y - {y_1}|{|^2}} \right)
 \label{eq3.18}
 \end{equation}
 where ${L_f}$   denotes gradient lipschitz constant of smooth primal component $f(x)$ \\and 
 ${E_{N + 1}} = L({\bar x_{N + 1}},y) - L(x,{\bar y_{N + 1}})$ with aggregation point  
 $({\bar x_{N + 1}},{\bar y_{N + 1}})$ defined as (\ref{eq2.14}).
\end{theorem}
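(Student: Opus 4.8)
The plan is to specialize the master inequality (\ref{eq3.14}) to the weakly convex regime $\mu_f=\mu_g=0$ and then sum it telescopically over $t=1,\dots,N$. Setting $\mu_f=\mu_g=0$ deletes the two quadratic penalty terms, and the real leverage comes from the parameter choices in (\ref{eq3.17}): with $\eta_t=t/(2L_f+N\|A\|^2)$ the weight $t/\eta_t$ collapses to the \emph{constant} $2L_f+N\|A\|^2$, and with $\tau_t=t/N$ the weight $t/\tau_t$ collapses to the \emph{constant} $N$. This is exactly what makes the difference-of-distances terms telescope cleanly rather than leaving $t$-dependent residual weights, so the whole argument reduces to bookkeeping on a single summation.

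Summing (\ref{eq3.14}) from $t=1$ to $N$, I would handle the left side first: writing $a_t=t(t-1)E_t$, the left side is $a_{t+1}-a_t$, which collapses to $a_{N+1}-a_1=N(N+1)E_{N+1}$ since $a_1=0$. On the right, the $x$-distance block sums to $(2L_f+N\|A\|^2)\big(\|x-x_1\|^2-\|x-x_{N+1}\|^2\big)$ and the $y$-distance block to $N\big(\|y-y_1\|^2-\|y-y_{N+1}\|^2\big)-N\sum_{t=1}^N\|y_{t+1}-y_t\|^2$. The two mixed bilinear terms also telescope: with $b_t=2(t-1)\langle Ax-Ax_t,\,y_{t-1}-y_t\rangle$ they equal $b_{t+1}-b_t$, summing to $b_{N+1}=2N\langle Ax-Ax_{N+1},\,y_N-y_{N+1}\rangle$ (the $t=1$ term vanishes). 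Finally the trailing term $\tfrac{t-1}{\tau_{t-1}}\|y_t-y_{t-1}\|^2$ contributes $N\sum_{t=1}^{N-1}\|y_{t+1}-y_t\|^2$, since its $t=1$ term vanishes and $(t-1)/\tau_{t-1}=N$; this cancels all of the negative $y$-difference sum except the last term, leaving only $-N\|y_N-y_{N+1}\|^2$.

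This bookkeeping should produce
\begin{align*}
N(N+1)E_{N+1} &\le (2L_f+N\|A\|^2)\|x-x_1\|^2 + N\|y-y_1\|^2 \\
&\quad - 2L_f\|x-x_{N+1}\|^2 - N\|y-y_{N+1}\|^2 \\
&\quad - N\|A\|^2\|x-x_{N+1}\|^2 - N\|y_N-y_{N+1}\|^2 \\
&\quad + 2N\langle Ax-Ax_{N+1},\,y_N-y_{N+1}\rangle .
\end{align*}
The main obstacle is to absorb the surviving boundary cross term in the last line. Here I would invoke Young's inequality together with $\|A(x-x_{N+1})\|\le\|A\|\,\|x-x_{N+1}\|$ to get $2N\langle A(x-x_{N+1}),\,y_N-y_{N+1}\rangle\le N\|A\|^2\|x-x_{N+1}\|^2+N\|y_N-y_{N+1}\|^2$, which \emph{exactly} cancels the two negative quadratic boundary terms on the third line. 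Discarding the remaining nonpositive terms $-2L_f\|x-x_{N+1}\|^2$ and $-N\|y-y_{N+1}\|^2$ and dividing through by $N(N+1)$ then yields (\ref{eq3.18}) after splitting the constant $2L_f+N\|A\|^2$ across its two denominators. The only steps demanding genuine care are the cancellation of the $y$-difference sums across two differently indexed summations (getting the $\tau_{t-1}$ versus $\tau_t$ weights to coincide) and ensuring the Young constant is precisely $N\|A\|^2$ so the boundary cancellation is exact rather than merely an upper bound.
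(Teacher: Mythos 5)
Your proposal is correct and follows essentially the same route as the paper: specialize (\ref{eq3.14}) with $\mu_f=\mu_g=0$, telescope all four blocks over $t=1,\dots,N$ (with the trailing $\frac{t-1}{\tau_{t-1}}\|y_t-y_{t-1}\|^2$ sum cancelling the $y$-difference sum except for $-N\|y_{N+1}-y_N\|^2$), and absorb the boundary cross term $2N\left\langle Ax-Ax_{N+1},\,y_N-y_{N+1}\right\rangle$ via Young's inequality with constant $N\|A\|^2$, exactly as you describe. The only step the paper makes explicit that you leave implicit is verifying that the choices (\ref{eq3.17}) satisfy the hypotheses (\ref{eq3.13}) under which (\ref{eq3.14}) was derived, namely $1/\eta_t \ge 2L_f/(t+1)+\|A\|^2\tau_t$ and $(t+1)\tau_{t+1}\ge t\tau_t$, both of which rely on $t\le N$ --- a routine two-line check worth including.
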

\begin{proof}
Note that $t \le N$
  and the primal step size   ${\eta _t}$  satisfy (\ref{eq3.13}), i.e.,
\[1/{\eta _t} = (2{L_f} + N||A|{|^2})/t \ge {L_f}2/(t + 1) + ||A|{|^2}t/N = {L_f}2/(t + 1) + ||A|{|^2}{\tau _t}\]
$(t + 1){\tau _{t + 1}} = {(t + 1)^2}/N > {t^2}/N = t{\tau _t}$
we can apply (\ref{eq3.14}) . Plugging
${\mu _f} = {\mu _g} = 0$   and (\ref{eq3.13}) into (\ref{eq3.14}), we have
\[\begin{array}{l}
t(t + 1){E_{t + 1}} - t(t - 1){E_t} \le \left( {||x - {x_t}|{|^2} - ||x - {x_{t + 1}}|{|^2}} \right)(2{L_f} + N||A|{|^2})\\
+ \left( {||y - {y_t}|{|^2} - ||y - {y_{t + 1}}|{|^2} - ||{y_{t + 1}} - {y_t}|{|^2}} \right)N\\
+ 2t\left\langle {Ax - A{x_{t + 1}},{y_t} - {y_{t + 1}}} \right\rangle  - 2(t - 1)\left\langle {Ax - A{x_t},{y_{t - 1}} - {y_t}} \right\rangle  + N||{y_t} - {y_{t - 1}}|{|^2}
\end{array}\]
Summing up above inequality over $t=1,2,...,N$ leads to
\begin{equation}
\begin{array}{l}
N(N + 1){E_{N + 1}} \le (2{L_f} + N||A|{|^2})(||x - {x_1}|{|^2} - ||x - {x_{N + 1}}|{|^2}) + N(||y - {y_1}|{|^2})\\
- \sum\limits_{t = 1}^N {N||{y_t} - {y_{t + 1}}{\rm{|}}{{\rm{|}}^2}}  + 2N\left\langle {Ax - A{x_{N + 1}},{y_N} - {y_{N + 1}}} \right\rangle  + \sum\limits_{t = 1}^N {N||{y_t} - {y_{t - 1}}|{|^2}} 
\end{array}
\label{eq3.19}
\end{equation}
Noting that
\[\begin{array}{l}
2\left\langle {Ax - A{x_{N + 1}},{y_N} - {y_{N + 1}}} \right\rangle  \le 2||Ax - A{x_{N + 1}}|| \cdot ||{y_N} - {y_{N + 1}}||\\
\le \left( {||A|{|^2}||x - {x_{N + 1}}|{|^2} + ||{y_N} - {y_{N + 1}}|{|^2}} \right)
\end{array}\]
Substituting above inequality into (3.19), we have
 \[{E_{N + 1}}N(N + 1) \le (2{L_f} + N||A|{|^2})||x - {x_1}|{|^2} + N(||y - {y_1}|{|^2})\]
which yields (\ref{eq3.18}) immediately.
\end{proof}

\subsection{Full Acceleration by Smooth Primal and Strongly Convex Dual Component}\label{section3.3}
\begin{theorem}\label{theorem3.2}
For saddle point problem (\ref{eq1.1}), if  primal component $f(x)$ is weakly convex with ${\mu _f} = 0$ , dual component $g(y)$ 	is strongly convex with 
${\mu _g} > 0$, setting parameter as
\begin{equation}
{\theta _t} = \frac{2}{{t + 1}},\begin{array}{*{20}{c}}
{}
\end{array}{\alpha _t} = \frac{{t - 1}}{t},\begin{array}{*{20}{c}}
{}
\end{array}\tau  = \frac{3}{{{\mu _g}}},\begin{array}{*{20}{c}}
{}
\end{array}{\tau _t} = \frac{\tau }{t},\begin{array}{*{20}{c}}
{}
\end{array}{\eta _t} = \frac{t}{{2{L_f} + \tau ||A|{|^2}}}
\label{eq3.20}
\end{equation}	
then it holds for LDPD that
\begin{equation}
{E_{k + 1}} \le \frac{{(2{L_f} + \tau ||A|{|^2})||x - {x_1}|{|^2}}}{{k(k + 1)}} + \frac{{||y - {y_1}|{|^2}}}{{k(k + 1)\tau }}
\label{eq3.21}
\end{equation}
with 
${E_{k + 1}} = L({\bar x_{k + 1}},y) - L(x,{\bar y_{k + 1}})$  and aggregation point $({\bar x_{k + 1}},{\bar y_{k + 1}})$  is defined in (\ref{eq2.14}).
\end{theorem}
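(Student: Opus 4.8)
The plan is to invoke the one-step estimate (\ref{eq3.14}) with the specific parameter choice (\ref{eq3.20}) and then sum over $t=1,\dots,k$, tracking which terms telescope and which require summation by parts. First I would check that (\ref{eq3.20}) meets the standing conditions (\ref{eq3.13}): since $\tau_t=\tau/t$ gives $t\tau_t=(t+1)\tau_{t+1}=\tau$, the monotonicity $(t+1)\tau_{t+1}\ge t\tau_t$ holds with equality, and $1/\eta_t=(2L_f+\tau\|A\|^2)/t=2L_f/t+\|A\|^2\tau_t\ge 2L_f/(t+1)+\|A\|^2\tau_t$, so (\ref{eq3.14}) applies. Substituting $\mu_f=0$, $t/\eta_t=2L_f+\tau\|A\|^2$, $t/\tau_t=t^2/\tau$, and $(t-1)/\tau_{t-1}=(t-1)^2/\tau$ turns (\ref{eq3.14}) into an inequality whose coefficients are explicit polynomials in $t$.

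Next I would sum over $t=1,\dots,k$. Four groups telescope cleanly: the left side collapses to $k(k+1)E_{k+1}$; the primal distance terms give $(2L_f+\tau\|A\|^2)(\|x-x_1\|^2-\|x-x_{k+1}\|^2)$; the paired cross products $2t\langle Ax-Ax_{t+1},y_t-y_{t+1}\rangle-2(t-1)\langle Ax-Ax_t,y_{t-1}-y_t\rangle$ collapse to the single endpoint term $2k\langle Ax-Ax_{k+1},y_k-y_{k+1}\rangle$; and the consecutive dual-difference norms $-\tfrac{t^2}{\tau}\|y_{t+1}-y_t\|^2+\tfrac{(t-1)^2}{\tau}\|y_t-y_{t-1}\|^2$ collapse to $-\tfrac{k^2}{\tau}\|y_{k+1}-y_k\|^2$.

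The hard part will be the dual distance terms $\tfrac{t^2}{\tau}(\|y-y_t\|^2-\|y-y_{t+1}\|^2)$, whose $t$-dependent coefficient prevents telescoping. I would apply summation by parts to rewrite their sum as $\tfrac{1}{\tau}\big[\|y-y_1\|^2+\sum_{t=2}^k(2t-1)\|y-y_t\|^2-k^2\|y-y_{k+1}\|^2\big]$, and reindex the strong-convexity contribution $-\sum_{t=1}^k t\mu_g\|y-y_{t+1}\|^2$ as $-\mu_g\sum_{t=2}^{k+1}(t-1)\|y-y_t\|^2$. The net coefficient of $\|y-y_t\|^2$ for $2\le t\le k$ is then $\tfrac{2t-1}{\tau}-\mu_g(t-1)$; here the choice $\tau=3/\mu_g$ is exactly calibrated so that this equals $\tfrac{\mu_g}{3}(2-t)\le 0$, vanishing at the binding case $t=2$ (since $\tfrac{2t-1}{t-1}=2+\tfrac{1}{t-1}$ is maximized there, forcing $\tau\mu_g\ge 3$). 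Consequently all intermediate dual distance terms, together with the nonpositive endpoint contributions at $t=k+1$, may simply be discarded, leaving only the initial term $\tfrac{1}{\tau}\|y-y_1\|^2$.

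Finally I would dispose of the leftover endpoint cross product by Young's inequality, $2k\langle Ax-Ax_{k+1},y_k-y_{k+1}\rangle\le\tau\|A\|^2\|x-x_{k+1}\|^2+\tfrac{k^2}{\tau}\|y_{k+1}-y_k\|^2$, where the dual-difference norm exactly cancels the telescoped $-\tfrac{k^2}{\tau}\|y_{k+1}-y_k\|^2$, leaving only $\tau\|A\|^2\|x-x_{k+1}\|^2$; this in turn is absorbed by the $-(2L_f+\tau\|A\|^2)\|x-x_{k+1}\|^2$ produced by the primal distance telescoping, since their sum is $-2L_f\|x-x_{k+1}\|^2\le 0$. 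What survives is $k(k+1)E_{k+1}\le(2L_f+\tau\|A\|^2)\|x-x_1\|^2+\tfrac{1}{\tau}\|y-y_1\|^2$, and dividing by $k(k+1)$ yields (\ref{eq3.21}).
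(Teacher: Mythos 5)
Your proposal is correct and follows essentially the same route as the paper's proof: verify that (\ref{eq3.20}) satisfies (\ref{eq3.13}), specialize (\ref{eq3.14}), sum over $t=1,\dots,k$ with telescoping, absorb the endpoint cross term $2k\left\langle Ax - Ax_{k+1}, y_k - y_{k+1}\right\rangle$ by Young's inequality against the leftover $-k^2\|y_{k+1}-y_k\|^2/\tau$ and $-(2L_f+\tau\|A\|^2)\|x-x_{k+1}\|^2$ terms, and use $\tau = 3/\mu_g$ to eliminate the intermediate dual distance terms. The only cosmetic difference is bookkeeping: you handle the dual distances by summation by parts and then check that each intermediate coefficient $\frac{2t-1}{\tau}-\mu_g(t-1)$ is nonpositive, whereas the paper imposes the equivalent condition $t^2/\tau + t\mu_g \ge (t+1)^2/\tau$ (its (\ref{eq3.24})) so that the sum telescopes directly — both reduce to the same requirement $\tau\mu_g \ge 3$.
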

\begin{proof}
Since $(t + 1){\tau _{t + 1}} = t{\tau _t} = \tau$,
$1/{\eta _t} = 2{L_f}/t + ||A|{|^2}\tau /t \ge 2{L_f}/(t + 1) + ||A|{|^2}{\tau _t}$, condition (\ref{eq3.13}) is satisfied, one can apply (\ref{eq3.14}). Plugging ${\mu _f} = 0$  and (\ref{eq3.20}) into (\ref{eq3.14}) produces

\[\begin{array}{l}
t(t + 1){E_{t + 1}} - t(t - 1){E_t} \le \left( {||x - {x_t}|{|^2} - ||x - {x_{t + 1}}|{|^2}} \right)(2{L_f} + \tau ||A|{|^2})\\
- t{\mu _g}||y - {y_{t + 1}}|{|^2} + \left( {||y - {y_t}|{|^2} - ||y - {y_{t + 1}}|{|^2} - ||{y_{t + 1}} - {y_t}|{|^2}} \right){t^2}/\tau \\
+ 2t\left\langle {Ax - A{x_{t + 1}},{y_t} - {y_{t + 1}}} \right\rangle  - 2(t - 1)\left\langle {Ax - A{x_t},{y_{t - 1}} - {y_t}} \right\rangle  + ||{y_t} - {y_{t - 1}}|{|^2}{(t - 1)^2}/\tau 
\end{array}\]
Summing up above inequality through $t=1,2,...k$  delivers
\begin{equation}
\begin{array}{l}
k(k + 1){E_{k + 1}} \le (2{L_f} + \tau ||A|{|^2})\left( {||x - {x_1}|{|^2} - ||x - {x_{k + 1}}|{|^2}} \right) - ||{y_{k + 1}} - {y_k}{\rm{|}}{{\rm{|}}^2}{k^2}/\tau \\
+ \sum\limits_{t = 1}^k {\left[ {||y - {y_t}|{|^2}{t^2}/\tau  - ({t^2}/\tau  + t{\mu _g})||y - {y_{t + 1}}|{|^2}} \right]}  + 2k\left\langle {Ax - A{x_{k + 1}},{y_k} - {y_{k + 1}}} \right\rangle 
\end{array}
\label{eq3.22}
\end{equation}
Noting that

\[\begin{array}{l}
2k\left\langle {Ax - A{x_{k + 1}},{y_k} - {y_{k + 1}}} \right\rangle  \le 2k||{y_k} - {y_{k + 1}}||(||A|| \cdot ||x - {x_{k + 1}}||)\\
\le {k^2}||{y_k} - {y_{k + 1}}|{|^2}/\tau  + \tau ||A|{|^2}||x - {x_{k + 1}}|{|^2}
\end{array}\]
Substituting above inequality into (\ref{eq3.22}) yields
\begin{equation}
\begin{array}{l}
k(k + 1){E_{k + 1}} \le (2{L_f} + \tau ||A|{|^2})||x - {x_1}|{|^2}\\
+ \sum\limits_{t = 1}^k {\left[ {||y - {y_t}|{|^2}{t^2}/\tau  - ({t^2}/\tau  + t{\mu _g})||y - {y_{t + 1}}|{|^2}} \right]} 
\end{array}
\label{eq3.23}
\end{equation}
Letting
\begin{equation}
{t^2}/\tau  + t{\mu _g} \ge {(t + 1)^2}/\tau 
\label{eq3.24}
\end{equation}
Plugging (\ref{eq3.24}) into (\ref{eq3.23}) gives
\[k(k + 1){E_{k + 1}} \le (2{L_f} + \tau ||A|{|^2})||x - {x_1}|{|^2} + ||y - {y_1}|{|^2}/\tau \]
which leads to (\ref{eq3.21}) instantly. Inequality (\ref{eq3.24}) demands ${t^2} + \tau t{\mu _g} \ge {(t + 1)^2}$, that is 
$\tau t{\mu _g} \ge 2t + 1$,which is satisfied by $\tau  = 3/{\mu _g}$ in (\ref{eq3.20}) since $(3/{\mu _g})t{\mu _g} \ge 2t + 1$.

\end{proof}

\begin{lema}
For convex function $f(x)$  and $g(y)$  , let ${\mu _f} \ge 0$, and
${\mu _g} \ge 0$  denote their strongly convexity parameter respectively, and let $({x^*},{y^*})$  denote a saddle point of (\ref{eq1.1}). Let	
\begin{equation}
E_{t + 1}^* = L({\bar x_{t + 1}},{y^*}) - L({x^*},{\bar y_{t + 1}})
\label{eq3.25}
\end{equation}
we have
\begin{equation}
E_{t + 1}^* \ge {\mu _f}||{\bar x_{t + 1}} - {x^*}|{|^2}/2 + {\mu _g}||{\bar y_{t + 1}} - {y^*}|{|^2}/2
\label{eq3.26}
\end{equation}
\end{lema}
\begin{proof}
KKT condition in (\ref{eq1.1}) can be written as
\begin{equation}
 - {A^T}{y^*} \in \partial f({x^*}),\begin{array}{*{20}{c}}
{}&{}
\end{array}A{x^*} \in \partial g({y^*})
\label{eq3.27}
\end{equation}
Using strongly convexity of $f(x)$, $g(y)$ and noting (\ref{eq3.27}), we have
\begin{equation}
f(x) - f({x^*}) - \left\langle {x - {x^*}, - {A^T}{y^*}} \right\rangle  \ge {\mu _f}||x - {x^*}|{|^2}/2
\label{eq3.28}
\end{equation}
\begin{equation}
g(y) - g({y^*}) - \left\langle {A{x^*},y - {y^*}} \right\rangle  \ge {\mu _g}||y - {y^*}|{|^2}/2
\label{eq3.29}
\end{equation}
Using $L(x,y)$ in (1.1) , we have
\[\begin{array}{l}
E_{t + 1}^* = L({{\bar x}_{t + 1}},{y^*}) - L({x^*},{{\bar y}_{t + 1}})\\
= f({{\bar x}_{t + 1}}) + \left\langle {A{{\bar x}_{t + 1}},{y^*}} \right\rangle  - g({y^*}) - \left[ {f({x^*}) + \left\langle {A{x^*},{{\bar y}_{t + 1}}} \right\rangle  - g({{\bar y}_{t + 1}})} \right]\\
{\rm{ = }}f({{\bar x}_{t + 1}}) - f({x^*}) - \left\langle {{{\bar x}_{t + 1}}, - {A^T}{y^*}} \right\rangle  + g({{\bar y}_{t + 1}}) - g({y^*}) - \left\langle {A{x^*},{{\bar y}_{t + 1}}} \right\rangle \\
= f({{\bar x}_{t + 1}}) - f({x^*}) - \left\langle {{{\bar x}_{t + 1}} - {x^*}, - {A^T}{y^*}} \right\rangle  + g({{\bar y}_{t + 1}}) - g({y^*}) - \left\langle {A{x^*},{{\bar y}_{t + 1}} - {y^*}} \right\rangle \\
\ge {\mu _f}||{{\bar x}_{t + 1}} - {x^*}|{|^2}/2 + {\mu _g}||{{\bar y}_{t + 1}} - {y^*}|{|^2}/2
\end{array}\]
Where the last inequality comes from (\ref{eq3.28}) and (\ref{eq3.29}), hence finish proof of (\ref{eq3.26}).
\end{proof}

\newtheorem{coro}{Corollary}[section]
\begin{coro}
Let $({x^*},{y^*})$  denote a saddle point of (\ref{eq1.1}). For parameter setting in \ref{theorem3.2}, it holds that	
\begin{equation}
||{\bar y_{k + 1}} - {y^*}|| \le O(1/k)
\label{eq3.30}
\end{equation}
\end{coro}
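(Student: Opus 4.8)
The plan is to exploit the fact that the primal-dual gap bound of Theorem \ref{theorem3.2} is valid for an \emph{arbitrary} reference point $(x,y)$, and to specialize it to the saddle point $(x^*,y^*)$, where the preceding lemma (inequalities (\ref{eq3.25})--(\ref{eq3.26})) supplies a matching lower bound on the gap. Combining the resulting upper and lower estimates on the gap at the saddle point immediately pins down the convergence rate of $\|\bar y_{k+1}-y^*\|$.

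Concretely, I would first substitute $(x,y)=(x^*,y^*)$ into the conclusion (\ref{eq3.21}) of Theorem \ref{theorem3.2}. By the definition (\ref{eq3.25}), the left-hand side $E_{k+1}$ then becomes exactly $E^*_{k+1}=L(\bar x_{k+1},y^*)-L(x^*,\bar y_{k+1})$, giving
\[
E^*_{k+1} \le \frac{(2L_f+\tau\|A\|^2)\|x^*-x_1\|^2}{k(k+1)} + \frac{\|y^*-y_1\|^2}{k(k+1)\tau}.
\]
Next I would invoke the lower bound (\ref{eq3.26}) under the parameter regime of Theorem \ref{theorem3.2}, namely $\mu_f=0$ and $\mu_g>0$, so that the $\mu_f$-term drops out and only $E^*_{k+1}\ge (\mu_g/2)\|\bar y_{k+1}-y^*\|^2$ survives. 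Chaining this with the displayed upper bound yields
\[
\frac{\mu_g}{2}\|\bar y_{k+1}-y^*\|^2 \le \frac{(2L_f+\tau\|A\|^2)\|x^*-x_1\|^2}{k(k+1)} + \frac{\|y^*-y_1\|^2}{k(k+1)\tau},
\]
whence $\|\bar y_{k+1}-y^*\|^2 = O(1/k^2)$, and taking square roots delivers (\ref{eq3.30}).

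Since every ingredient is already in hand, I expect essentially no obstacle; the only points requiring a moment of care are verifying that inserting $(x^*,y^*)$ into the gap of Theorem \ref{theorem3.2} reproduces precisely the quantity $E^*_{k+1}$ appearing in (\ref{eq3.26}), and that setting $\mu_f=0$ leaves only the $\mu_g$-term of the lower bound. Both are immediate from the definitions, so the corollary follows by passing from the squared-distance $O(1/k^2)$ rate to the $O(1/k)$ rate for the distance itself.
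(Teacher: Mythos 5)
Your proposal is correct and follows exactly the paper's own argument: substitute $(x,y)=(x^*,y^*)$ into the gap bound (\ref{eq3.21}) of Theorem \ref{theorem3.2}, invoke the lower bound (\ref{eq3.26}) with $\mu_f=0$ so that $\frac{\mu_g}{2}\|\bar y_{k+1}-y^*\|^2 \le E^*_{k+1}$, and take square roots to obtain the $O(1/k)$ rate. No meaningful difference from the paper's proof.
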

\begin{proof}
 In (\ref{eq3.21}) of Theorem 3.2, letting $x = {x^*}$,$y = {y^*}$ , we have
 \[E_{k + 1}^* \le \frac{{(2{L_f} + \tau ||A|{|^2})||{x^*} - {x_1}|{|^2}}}{{k(k + 1)}} + \frac{{||{y^*} - {y_1}|{|^2}}}{{k(k + 1)\tau }}\]
Using (\ref{eq3.26}) and above inequality, we have 
 \[\frac{{{\mu _g}}}{2}||{\bar y_{k + 1}} - {y^*}|{|^2} \le E_{k + 1}^* \le \frac{1}{{k(k + 1)}}\left[ {(2{L_f} + \tau ||A|{|^2})||{x^*} - {x_1}|{|^2} + \frac{{||{y^*} - {y_1}|{|^2}}}{\tau }} \right]\]
 which can be written as (\ref{eq3.30}).
\end{proof}

\subsection{Full Acceleration by Smooth and Strongly Convex Primal Component}
Here let us recover the full acceleration \cite{Xu2016Accelerated} by smooth and strongly convex primal component.
\begin{theorem}
For strongly convex saddle point problem (\ref{eq1.1}) with ${\mu _g} = 0,{\mu _f} > 0$, Let
\begin{equation}
{\theta _t} = 1,\begin{array}{*{20}{c}}
{}
\end{array}{\alpha _t} = \frac{{t + {t_0}}}{{t + {t_0} + 1}},\begin{array}{*{20}{c}}
{}
\end{array}\tau  \le \frac{{{\mu _f}}}{{2||A|{|^2}}},\begin{array}{*{20}{c}}
{}
\end{array}{\tau _t} = (t + 1)\tau ,\begin{array}{*{20}{c}}
{}
\end{array}{\eta _t} = \frac{1}{{{L_f} + {\tau _t}||A|{|^2}}}
\label{eq3.31}
\end{equation}

and ${y_0} = {y_1}$, it holds for LDPD that
\begin{equation}
{\tilde E_{k + 1}} \le \frac{{({t_0} + 2)}}{{k(k + 3 + 2{t_0})}}\left[ {||x - {x_1}|{|^2}\left( {{L_f} - {\mu _f} + 2{\tau _{}}||A|{|^2}} \right) + ||y - {y_1}|{|^2}/(2\tau )} \right]
\label{eq3.32}
\end{equation}
\begin{equation}
{\tilde E_{k + 1}} = L({\tilde x_{k + 1}},y) - L(x,{\tilde y_{k + 1}}),\begin{array}{*{20}{c}}
{}
\end{array}{\tilde x_{k + 1}} = \sum\limits_{t = 1}^k {(t + {t_0} + 1){x_{t + 1}}} /\sum\limits_{t = 1}^k {(t + {t_0} + 1)} 
\label{eq3.33}
\end{equation}
\begin{equation}
{\tilde y_{k + 1}} = \sum\limits_{t = 1}^k {(t + {t_0} + 1){y_{t + 1}}} /\sum\limits_{t = 1}^k {(t + {t_0} + 1)} ,\begin{array}{*{20}{c}}
{}
\end{array}{t_0} = \left\lceil {2\frac{{{L_f} - {\mu _f}}}{{{\mu _f}}}} \right\rceil 
\label{eq3.34}
\end{equation}
\end{theorem}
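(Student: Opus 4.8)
The plan is to start from the master inequality (\ref{eq3.10}) and specialize it to the present regime, where $\theta_t=1$ and $\mu_g=0$. With $\theta_t=1$ we have $\bar x_{t+1}=x_{t+1}$ and $\bar y_{t+1}=y_{t+1}$, so the left side of (\ref{eq3.10}) collapses to $E_{t+1}=L(x_{t+1},y)-L(x,y_{t+1})$ and the $(1-\theta_t)E_t$ coupling vanishes. I would then multiply the resulting inequality by the weight $w_t:=t+t_0+1$ and sum over $t=1,\dots,k$. The first move is to convert the two proximal inner products $\langle x-x_{t+1},x_{t+1}-x_t\rangle/\eta_t$ and $\langle y-y_{t+1},y_{t+1}-y_t\rangle/\tau_t$ into squared-norm differences via the three-point identity $\langle a-b,b-c\rangle=\tfrac12(\|a-c\|^2-\|a-b\|^2-\|b-c\|^2)$, exactly as in the derivation of (\ref{eq3.14}).

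Next I would exploit the precise parameter choices to force telescoping. Using $1/\eta_t=L_f+\tau_t\|A\|^2$, the coefficient of $\|x_{t+1}-x_t\|^2$ becomes $-w_t\tau_t\|A\|^2/2$, while $\|x-x_t\|^2$ acquires coefficient $w_t(L_f-\mu_f+\tau_t\|A\|^2)/2$ and $\|x-x_{t+1}\|^2$ coefficient $-w_t(L_f+\tau_t\|A\|^2)/2$; the dual norms split analogously. Because $\alpha_t=(t+t_0)/(t+t_0+1)$ yields $w_t\alpha_t=w_{t-1}$, the two bilinear terms $w_t\langle Ax-Ax_{t+1},y_t-y_{t+1}\rangle$ and $-w_{t-1}\langle Ax-Ax_t,y_{t-1}-y_t\rangle$ telescope, and the bottom boundary term vanishes thanks to the convention $y_0=y_1$. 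The remaining correction $w_{t-1}\langle Ax_t-Ax_{t+1},y_t-y_{t-1}\rangle$ I would bound by Cauchy--Schwarz followed by Young's inequality with splitting parameter $\gamma=\tau_{t-1}=t\tau$ (mirroring (\ref{eq3.16})); this choice makes its $\|y_t-y_{t-1}\|^2$ part cancel the leftover $-w_{t-1}\|y_t-y_{t-1}\|^2/(2\tau_{t-1})$ exactly, while its $\|x_{t+1}-x_t\|^2$ part, carrying weight $w_{t-1}t\tau\|A\|^2/2$, is dominated by the available $w_t\tau_t\|A\|^2/2$ since $(t+t_0)t\le(t+t_0+1)(t+1)$.

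The crux, and the step I expect to be the main obstacle, is verifying that after summation the net coefficient of every interior $\|x-x_s\|^2$ and $\|y-y_s\|^2$ (for $2\le s\le k$) is nonpositive, so these terms drop out and only the $t=1$ data survive. For the dual norms this reduces to $w_{t+1}/\tau_{t+1}\le w_t/\tau_t$, i.e.\ $(t+t_0+2)(t+1)\le(t+2)(t+t_0+1)$, which holds for every $t_0\ge 0$. For the primal norms the requirement is $w_s(L_f-\mu_f+\tau_s\|A\|^2)\le w_{s-1}(L_f+\tau_{s-1}\|A\|^2)$; since the left-minus-right side is nonincreasing in $s$ once $\tau\|A\|^2\le\mu_f/2$, it suffices to check the smallest index, where it collapses to a single inequality linear in $t_0$ of the form $2L_f\lesssim(t_0+2)\mu_f$. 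This is precisely what the integer choice $t_0=\lceil 2(L_f-\mu_f)/\mu_f\rceil$ is engineered to secure, as it forces $(t_0+2)\mu_f\ge 2L_f$. Making all four parameter constraints cooperate here, while keeping the bookkeeping of the three coupled quadratic families (the $x$-differences, the $y$-differences, and the $A$-correction) straight, is the delicate part.

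It then remains to dispose of the surviving top boundary cross term $w_k\langle Ax-Ax_{k+1},y_k-y_{k+1}\rangle$, which I would again split by Young's inequality: its $\|y_k-y_{k+1}\|^2$ piece cancels the remaining $-w_k\|y_{k+1}-y_k\|^2/(2\tau_k)$, and its $\|x-x_{k+1}\|^2$ piece is absorbed by the negative $-w_k(L_f+\tau_k\|A\|^2)/2$ coefficient. What is left on the right is only $w_1(L_f-\mu_f+2\tau\|A\|^2)/2\,\|x-x_1\|^2+w_1\,\|y-y_1\|^2/(4\tau)$ with $w_1=t_0+2$ and $\tau_1=2\tau$. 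Finally, since $f$ and $g$ are convex and the bilinear term is linear, Jensen's inequality applied to the weighted averages (\ref{eq3.33})--(\ref{eq3.34}) gives $\tilde E_{k+1}\le\big(\sum_{t=1}^k w_t\big)^{-1}\sum_{t=1}^k w_t E_{t+1}$; substituting $\sum_{t=1}^k 2w_t=k(k+3+2t_0)$ then delivers (\ref{eq3.32}) directly.
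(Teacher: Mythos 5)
Your proposal follows essentially the same route as the paper's own proof: specialize (\ref{eq3.10}) with $\theta_t=1$, $\mu_g=0$, weight by $w_t=t+t_0+1$, telescope the bilinear terms via $w_t\alpha_t=w_{t-1}$ with $y_0=y_1$, absorb the correction and top boundary cross terms by Young's inequality exactly as in (\ref{eq3.36})--(\ref{eq3.37}), impose the interior-coefficient condition (\ref{eq3.39}), and finish with Jensen's inequality. Even your verification of that condition (reducing it to $(t_0+2)\mu_f\ge 2L_f$, whereas the exact requirement at the smallest interior index under $\tau\|A\|^2\le\mu_f/2$ is $(t_0+1)\mu_f\ge 2L_f$) carries the same slight looseness as the paper's own algebraic rewriting of (\ref{eq3.39}), so the two arguments coincide in both structure and precision.
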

\begin{proof}
Using ${\theta _t} = 1$, $1/{\eta _t} = ({L_f} + {\tau _t}||A|{|^2})$ and 
${\tau _t} = (t + 1)\tau$ of (\ref{eq3.31}) into (\ref{eq3.10}), we have
\[\begin{array}{l}
{E_{t + 1}} \le {L_f}||{x_{t + 1}} - {x_t}|{|^2}/2 - {\mu _f}||x - {x_t}|{|^2}/2\\
+ ({L_f} + {\tau _t}||A|{|^2})\left( {||x - {x_t}|{|^2} - ||x - {x_{t + 1}}|{|^2} - ||{x_{t + 1}} - {x_t}|{|^2}} \right)/2\\
+ (||y - {y_t}|{|^2} - ||y - {y_{t + 1}}|{|^2} - ||{y_t} - {y_{t + 1}}{\rm{|}}{{\rm{|}}^2})/[2(t + 1)\tau ]\\
+ \left\langle {Ax - A{x_{t + 1}},{y_t} - {y_{t + 1}}} \right\rangle  - \left\langle {Ax - A{x_t},{y_{t - 1}} - {y_t}} \right\rangle {\alpha _t} + \left\langle {A{x_t} - A{x_{t + 1}},{y_t} - {y_{t - 1}}} \right\rangle {\alpha _t}
\end{array}\]
Multiplying above inequality with $(t + {t_0} + 1)$ and using 
$(t + {t_0} + 1){\alpha _t} = (t + {t_0})$ leads to
\begin{equation}
\begin{array}{l}
(t + {t_0} + 1){E_{t + 1}} \le ({L_f} + {\tau _t}||A|{|^2})(t + {t_0} + 1)\left( {||x - {x_t}|{|^2} - ||x - {x_{t + 1}}|{|^2}} \right)/2\\
- (t + {t_0} + 1){\mu _f}||x - {x_t}|{|^2}/2 - (t + {t_0} + 1){\tau _t}||A|{|^2}||{x_{t + 1}} - {x_t}|{|^2}/2\\
+ (t + {t_0} + 1)(||y - {y_t}|{|^2} - ||y - {y_{t + 1}}|{|^2} - ||{y_t} - {y_{t + 1}}{\rm{|}}{{\rm{|}}^2})/[2(t + 1)\tau ]\\
+ (t + {t_0} + 1)\left\langle {Ax - A{x_{t + 1}},{y_t} - {y_{t + 1}}} \right\rangle  - (t + {t_0})\left\langle {Ax - A{x_t},{y_{t - 1}} - {y_t}} \right\rangle \\
+ (t + {t_0})\left\langle {A{x_t} - A{x_{t + 1}},{y_t} - {y_{t - 1}}} \right\rangle 
\end{array}
\label{eq3.35}
\end{equation}
Since ${\tau _t} = (t + 1)\tau$ , we have  
$(t + {t_0}){\tau _{t - 1}} \le (t + {t_0} + 1){\tau _t}$, hence
\begin{equation}
\begin{array}{l}
(t + {t_0})\left\langle {A{x_t} - A{x_{t + 1}},{y_t} - {y_{t - 1}}} \right\rangle \\
\le (t + {t_0})(||A{x_t} - A{x_{t + 1}}|{|^2}{\tau _{t - 1}} + ||{y_t} - {y_{t - 1}}|{|^2}/{\tau _{t - 1}})/2\\
\le (t + {t_0} + 1){\tau _t}||A{x_t} - A{x_{t + 1}}|{|^2}/2 + (t + {t_0})||{y_t} - {y_{t - 1}}|{|^2}/(2{\tau _{t - 1}})
\label{eq3.36}
\end{array}
\end{equation}
Summing up (\ref{eq3.35}) over $t=1,2,...k$ , and using (\ref{eq3.36}), we have 
\begin{equation}
\begin{array}{l}
\sum\limits_{t = 1}^k {(t + {t_0} + 1){E_{t + 1}}}  \le \sum\limits_{t = 1}^k {\left[ {({L_f} + {\tau _t}||A|{|^2})(t + {t_0} + 1) - (t + {t_0} + 1){\mu _f}} \right]\frac{{||x - {x_t}|{|^2}}}{2}} \\
- \sum\limits_{t = 1}^k {({L_f} + {\tau _t}||A|{|^2})(t + {t_0} + 1)\frac{{||x - {x_{t + 1}}|{|^2}}}{2}}  - \sum\limits_{t = 1}^k {(t + {t_0} + 1){\tau _t}||A|{|^2}\frac{{||{x_{t + 1}} - {x_t}|{|^2}}}{2}} \\
+ ({t_0} + 2)(||y - {y_1}|{|^2})/(4\tau ) - \sum\limits_{t = 1}^k {(t + {t_0} + 1)||{y_t} - {y_{t + 1}}{\rm{|}}{{\rm{|}}^2}/[2(t + 1)\tau ]} \\
+ (k + {t_0} + 1)\left\langle {Ax - A{x_{k + 1}},{y_k} - {y_{k + 1}}} \right\rangle \\
+ \sum\limits_{t = 1}^k {\left[ {(t + {t_0} + 1){\tau _t}||A{x_t} - A{x_{t + 1}}|{|^2}/2 + (t + {t_0})||{y_t} - {y_{t - 1}}|{|^2}/(2{\tau _{t - 1}})} \right]} 
\end{array}
\label{eq3.37}
\end{equation}
Noting that
\[\begin{array}{l}
2(k + {t_0} + 1)\left\langle {Ax - A{x_{k + 1}},{y_k} - {y_{k + 1}}} \right\rangle \\
\le (k + {t_0} + 1)2||{y_k} - {y_{k + 1}}||(||A|| \cdot ||x - {x_{k + 1}}||)\\
\le (k + {t_0} + 1)\left[ {||{y_k} - {y_{k + 1}}|{|^2}/{\tau _k} + {\tau _k}||A|{|^2}||x - {x_{k + 1}}|{|^2}} \right]\\
= (k + {t_0} + 1)||{y_k} - {y_{k + 1}}|{|^2}/{\tau _k} + (k + {t_0} + 1){\tau _k}||A|{|^2}||x - {x_{k + 1}}|{|^2}
\end{array}\]
We can simplify (\ref{eq3.37}) as	
\begin{equation}
\begin{array}{l}
2\sum\limits_{t = 1}^k {(t + {t_0} + 1){E_{t + 1}}}  \le \sum\limits_{t = 1}^k {\left[ {({L_f} + {\tau _t}||A|{|^2})(t + {t_0} + 1) - (t + {t_0} + 1){\mu _f}} \right]||x - {x_t}|{|^2}} \\
- \sum\limits_{t = 1}^k {\left[ {({L_f} + {\tau _t}||A|{|^2})(t + {t_0} + 1)||x - {x_{t + 1}}|{|^2}} \right]}  + ({t_0} + 2)(||y - {y_1}|{|^2})/(2\tau )\\
+ (k + {t_0} + 1){\tau _k}||A|{|^2}||x - {x_{k + 1}}|{|^2}
\end{array}
\label{eq3.38}
\end{equation}
Suppose condition 
\begin{equation}
({L_f} + {\tau _t}||A|{|^2})(t + {t_0} + 1) \ge ({L_f} + {\tau _{t + 1}}||A|{|^2})(t + {t_0} + 2) - (t + {t_0} + 2){\mu _f}
\label{eq3.39}
\end{equation}
is satisfied, we can further simplify (\ref{eq3.38}) as
\begin{equation}
\begin{array}{l}
2\sum\limits_{t = 1}^k {(t + {t_0} + 1){E_{t + 1}}}  \le ({t_0} + 2)(||y - {y_1}|{|^2})/(2\tau )\\
+ \left[ {({L_f} + 2\tau ||A|{|^2})({t_0} + 2) - ({t_0} + 2){\mu _f}} \right]||x - {x_1}|{|^2}
\end{array}
\end{equation}
Using above inequality and Jensen inequality, we have (\ref{eq3.32}).
condition (\ref{eq3.39}) is equivalent to
\[({\mu _f} - \tau ||A|{|^2})(t + {t_0}) \ge ({L_f} + \tau ||A|{|^2} - {\mu _f}) + 2\tau ||A|{|^2} - {\mu _f}\]
and can be satisfied by $2\tau ||A|{|^2} - {\mu _f} \le 0$ in 

(\ref{eq3.31})  and ${t_0} \ge 2({L_f} - {\mu _f})/{\mu _f}$ from (\ref{eq3.34}).
\end{proof}

\begin{remark}\label{remark3.1}
For single step gradient LDPD with weakly convex problem, one can let
\[{\theta _t} = 1,\begin{array}{*{20}{c}}
{}
\end{array}{\alpha _t} = 1,\begin{array}{*{20}{c}}
{}
\end{array}{\mu _f} = {\mu _g} = 0,\begin{array}{*{20}{c}}
{}
\end{array}{\tau _t} = \tau  > 0,\begin{array}{*{20}{c}}
{}
\end{array}{\eta _t} = \eta  = 1/({L_f} + \tau ||A|{|^2})\]	
Using Jensen inequality, we have  
${\tilde E_{k + 1}} \le \frac{1}{{2k}}({L_f} + \tau ||A|{|^2})||x - {x_1}|{|^2} + \frac{1}{{2k\tau }}||y - {y_1}|{|^2}$
where ${\tilde E_{k + 1}} = L({\tilde x_{k + 1}},y) - L(x,{\tilde y_{k + 1}})$
, ${\tilde x_{k + 1}} = (1/k)\sum\limits_{t = 1}^k {{x_{t + 1}}}$ and 
${\tilde y_{k + 1}} = (1/k)\sum\limits_{t = 1}^k {{y_{t + 1}}}$ . \\
The proof is relatively simpler and omitted.
\end{remark}

\section{Exact DPD(EDPD)}
In this section, we present Exact DPD(EDPD) which does not require linearization of primal component hence can be directly apply to solve non-smooth convex concave point problem. 

\subsection{Iteration of EDPD}
The iteration of EDPD is listed as following:\\
\noindent\rule{17cm}{0.05em}\\
  		\textbf{For $t=1,2,...,k$   }
  		\vspace{-5mm}
		\begin{align}
			{x_{t + 1}}&= \arg {\min _x}f(x) + \left\langle {Ax,{{\hat y}_t}} \right\rangle  + \frac{1}{{2{\eta _t}}}||x - {x_t}|{|^2} \label{eq4.1} \\
			{y_{t + 1}}&= \arg {\min _y}||y - ({y_t} + {\tau _t}A{x_{t + 1}})|{|^2}/(2{\tau _t}) + g(y)\label{eq4.2}\\
			{\hat y_{t + 1}}&= ({y_{t + 1}} - {y_t}){\alpha _{t + 1}} + {y_{t + 1}}\label{eq4.3}
		\end{align}
		\textbf{EndFor}\\
\noindent\rule{17cm}{0.05em}\\
\vspace{-7mm}

   Note that LDPD use linearization and multi-step gradient method to update primal variable as (\ref{eq2.2}), whereas EDPD does not linearize f(x) and updates primal variable as (\ref{eq4.1}).

\subsection{Gap Function and Optimal Condition}
Let us firstly define gap function with respect to iteration
$({x_{t + 1}},{y_{t + 1}})$ as
\begin{equation}
\begin{array}{l}
{E_{t + 1}} = L({x_{t + 1}},y) - L(x,{y_{t + 1}})\\
{\rm{ = }}f({x_{t + 1}}) - f(x){\rm{ + }}\left\langle {A{x_{t + 1}},y} \right\rangle  - \left\langle {Ax,{y_{t + 1}}} \right\rangle  - \left[ {g(y) - g({y_{t + 1}})} \right]
\label{eq4.4}
\end{array}
\end{equation}
Optimal condition of exact update (\ref{eq4.1}) is 
\[0 \in \partial f({x_{t + 1}}) + {A^T}{\hat y_t} + ({x_{t + 1}} - {x_t})/{\eta _t}\]
Since $f(x)$ is strongly convex with parameter ${\mu _f}$, there is
\[f(x) - f({x_{t + 1}}) \ge (x - {x_{t + 1}})\partial f({x_{t + 1}}) + {\mu _f}||x - {x_{t + 1}}|{|^2}/2\]
Combining above two relation, we have
\begin{equation}
f({x_{t + 1}}) - f(x) \le \left\langle {x - {x_{t + 1}},{x_{t + 1}} - {x_t}} \right\rangle /{\eta _t} + \left\langle {Ax - A{x_{t + 1}},{{\hat y}_t}} \right\rangle  - {\mu _f}||x - {x_{t + 1}}|{|^2}/2
\label{eq4.5}
\end{equation}

\begin{prop}
It holds for iteration in EDPD that
\begin{equation}
\begin{array}{l}
{E_{t + 1}} \le \left\langle {x - {x_{t + 1}},{x_{t + 1}} - {x_t}} \right\rangle /{\eta _t} - {\mu _f}||x - {x_{t + 1}}|{|^2}/2\\
+ \left\langle {y - {y_{t + 1}},{y_{t + 1}} - {y_t}} \right\rangle /{\tau _t} - {\mu _g}||y - {y_{t + 1}}|{|^2}/2\\
+ \left\langle {Ax - A{x_{t + 1}},{y_t} - {y_{t + 1}}} \right\rangle  - \left\langle {Ax - A{x_t},{y_{t - 1}} - {y_t}} \right\rangle {\alpha _t}\\
+ \left\langle {A{x_t} - A{x_{t + 1}},{y_t} - {y_{t - 1}}} \right\rangle {\alpha _t}
\end{array}
\label{eq4.6}
\end{equation}
\end{prop}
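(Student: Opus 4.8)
The plan is to reproduce the derivation of inequality (\ref{eq3.10}) from the LDPD analysis, replacing the linearized primal bound (\ref{eq3.9}) by the exact estimate (\ref{eq4.5}). First I would expand $E_{t+1}$ using its definition (\ref{eq4.4}) into three pieces: the primal difference $f(x_{t+1})-f(x)$, the bilinear pair $\langle Ax_{t+1},y\rangle-\langle Ax,y_{t+1}\rangle$, and the dual difference $-[g(y)-g(y_{t+1})]$. For the primal piece I would substitute (\ref{eq4.5}), which already packages the subgradient optimality of the exact update (\ref{eq4.1}) together with the strong-convexity term $-\mu_f\|x-x_{t+1}\|^2/2$; unlike the linearized case, no $L_f\|x_{t+1}-x_t\|^2$ term is generated, since $f$ is kept intact. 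For the dual piece I would apply Lemma 3.1, i.e.\ inequality (\ref{eq3.1}). The point worth stating is that the $y$-update (\ref{eq4.2}) of EDPD coincides exactly with the update (\ref{eq2.4}) of LDPD, so the proof of Lemma 3.1 — which uses only that update's optimality condition and the strong convexity of $g$ — transfers verbatim and (\ref{eq3.1}) is valid for EDPD.

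After these two substitutions, $E_{t+1}$ is bounded by the target quadratic terms in $1/\eta_t$, $1/\tau_t$, $\mu_f$ and $\mu_g$, plus a residual made of the four bilinear inner products $\langle Ax-Ax_{t+1},\hat y_t\rangle+\langle Ax_{t+1},y\rangle-\langle Ax,y_{t+1}\rangle-\langle Ax_{t+1},y-y_{t+1}\rangle$. This residual is precisely the quantity rewritten in (\ref{eq3.12}), so I would reuse that computation: cancel to reach $\langle Ax-Ax_{t+1},\hat y_t\rangle-\langle Ax,y_{t+1}\rangle+\langle Ax_{t+1},y_{t+1}\rangle$, then substitute $\hat y_t=(y_t-y_{t-1})\alpha_t+y_t$ from (\ref{eq4.3}) and regroup, which produces the three extrapolation terms $\langle Ax-Ax_{t+1},y_t-y_{t+1}\rangle$, $-\langle Ax-Ax_t,y_{t-1}-y_t\rangle\alpha_t$ and $\langle Ax_t-Ax_{t+1},y_t-y_{t-1}\rangle\alpha_t$ on the right of (\ref{eq4.6}).

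No step here is genuinely difficult; the substitutions and the bilinear bookkeeping are routine. The only subtlety I would flag — and the nearest thing to an obstacle — is the reuse of Lemma 3.1, which was stated inside the LDPD section: one must verify that (\ref{eq4.2}) and (\ref{eq2.4}) are the same update so that the lemma applies unchanged. Once that is noted, assembling the primal bound, the dual bound, and the bilinear identity yields (\ref{eq4.6}) immediately.
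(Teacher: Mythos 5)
Your proposal is correct and follows essentially the same route as the paper's own proof: expand $E_{t+1}$ via (\ref{eq4.4}), bound the primal part by (\ref{eq4.5}) and the dual part by Lemma 3.1 (valid since (\ref{eq4.2}) coincides with (\ref{eq2.4})), then rewrite the bilinear residual exactly as in (\ref{eq3.12}) using $\hat y_t=(y_t-y_{t-1})\alpha_t+y_t$. The paper makes the same two observations you flag, namely that the dual update and the dual extrapolation in EDPD are identical to those in LDPD, so the LDPD lemmas transfer unchanged.
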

\begin{proof}
 Using the notation of $L(x,y)$  in (\ref{eq1.1}), we have
 \begin{equation}
 \begin{array}{l}
 {E_{t + 1}} = L({x_{t + 1}},y) - L(x,{y_{t + 1}})\\
 {\rm{ = }}f({x_{t + 1}}) - f(x){\rm{ + }}\left\langle {A{x_{t + 1}},y} \right\rangle  - \left\langle {Ax,{y_{t + 1}}} \right\rangle  + g({y_{t + 1}}) - g(y)
 \end{array}
 \label{eq4.7}
 \end{equation}
Note that the dual minimization (\ref{eq4.2}) is the same as (\ref{eq2.4}), we still can apply (\ref{eq3.1}).
Applying (\ref{eq4.5}) and (\ref{eq3.1}) into (\ref{eq4.7}) yields
\begin{equation}
\begin{array}{l}
{E_{t + 1}} \le \left\langle {x - {x_{t + 1}},{x_{t + 1}} - {x_t}} \right\rangle /{\eta _t} + \left\langle {Ax - A{x_{t + 1}},{{\hat y}_t}} \right\rangle  - {\mu _f}||x - {x_{t + 1}}|{|^2}/2\\
{\rm{ + }}\left\langle {A{x_{t + 1}},y} \right\rangle  - \left\langle {Ax,{y_{t + 1}}} \right\rangle  + \left\langle {y - {y_{t + 1}},{y_{t + 1}} - {y_t}} \right\rangle /{\tau _t}\\
- {\mu _g}||y - {y_{t + 1}}|{|^2}/2 - \left\langle {A{x_{t + 1}},y - {y_{t + 1}}} \right\rangle 
\end{array}
\label{eq4.8}
\end{equation}
Since dual extrapolation (\ref{eq4.3}) is the same as (\ref{eq2.5}), we have the same equality as (\ref{eq3.12}), written as

\[\begin{array}{l}
\left\langle {Ax - A{x_{t + 1}},{{\hat y}_t}} \right\rangle {\rm{ + }}\left\langle {A{x_{t + 1}},y} \right\rangle  - \left\langle {Ax,{y_{t + 1}}} \right\rangle  - \left\langle {A{x_{t + 1}},y - {y_{t + 1}}} \right\rangle \\
= \left\langle {Ax - A{x_{t + 1}},{y_t} - {y_{t + 1}}} \right\rangle  - \left\langle {Ax - A{x_t},{y_{t - 1}} - {y_t}} \right\rangle {\alpha _t} + \left\langle {A{x_t} - A{x_{t + 1}},{y_t} - {y_{t - 1}}} \right\rangle {\alpha _t}
\end{array}\]
Substituting above equality into (\ref{eq4.8}), we have (\ref{eq4.6}).
\end{proof}
\subsection{ EDPD with Strongly Convex Primal Component}
Using (\ref{eq4.6}), we can derive the convergence rate for Exact DPD in Section 4.1. 

\begin{theorem}
 For strongly convex saddle point problem (\ref{eq1.1}) with
 ${\mu _g} = 0$ and ${\mu _f} > 0$ ,\\ Let  $\{ {x_{t + 1}},{y_{t + 1}}\} $
 be produced by EDPD in (\ref{eq4.1})-(\ref{eq4.3}), let dual step-size ${\tau _t}$, primal step-size ${\eta _t}$ and
  extrapolation parameter ${\alpha _{t + 1}}$  be set as
 \begin{equation}
 {\tau _t} = (t + 1)\tau ,\begin{array}{*{20}{c}}
 {}
 \end{array}\tau  \le \frac{{{\mu _f}}}{{2||A|{|^2}}},\begin{array}{*{20}{c}}
 {}
 \end{array}{\eta _t} = \frac{1}{{{\tau _t}||A|{|^2}}},\begin{array}{*{20}{c}}
 {}
 \end{array}{\alpha _{t + 1}} = \frac{{t + 2}}{{t + 3}}
 \label{eq4.9}
  \end{equation}
  It holds for EDPD that\\
  \begin{equation}
  {\tilde E_{k + 1}} \le \frac{1}{{k(k + 5)}}\left( {6\tau ||A|{|^2}||x - {x_1}|{|^2} + \frac{3}{{2\tau }}||y - {y_1}|{|^2}} \right)
  \label{eq4.10}
  \end{equation}
  with 
  ${\tilde E_{k + 1}} = L({\tilde x_{k + 1}},y) - L(x,{\tilde y_{k + 1}})$
  and
  \begin{equation}
  {\tilde x_{k + 1}} = \sum\limits_{t = 1}^k {(t + 2){x_{t + 1}}} /\left( {\sum\limits_{t = 1}^k {(t + 2)} } \right),\begin{array}{*{20}{c}}
  {}
  \end{array}{\tilde y_{k + 1}} = \sum\limits_{t = 1}^k {(t + 2){y_{t + 1}}} /\left( {\sum\limits_{t = 1}^k {(t + 2)} } \right)
  \label{eq4.11}
   \end{equation}
 \end{theorem}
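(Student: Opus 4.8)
The plan is to mirror the proof of Theorem~3.3 (full acceleration by a strongly convex primal component for LDPD), since both cases have $\mu_f>0,\mu_g=0$. The only structural differences are that EDPD carries no $L_f$ term (the primal update is exact, so the starting point is (\ref{eq4.6}) rather than (\ref{eq3.10})) and that here the strong-convexity penalty $-\mu_f\|x-x_{t+1}\|^2/2$ is attached to the \emph{new} iterate $x_{t+1}$ rather than to $x_t$. This latter feature actually makes the telescoping cleaner, because the penalty lands on the same iterate that is being subtracted.

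First I would start from (\ref{eq4.6}) with $\mu_g=0$ and replace the two proximal inner products by the three-point identities $2\langle x-x_{t+1},x_{t+1}-x_t\rangle=\|x-x_t\|^2-\|x-x_{t+1}\|^2-\|x_{t+1}-x_t\|^2$ and its $y$-analogue, using $1/\eta_t=\tau_t\|A\|^2$ from (\ref{eq4.9}). Then I multiply through by the weight $(t+2)$ and use $(t+2)\alpha_t=t+1$ (from $\alpha_{t+1}=(t+2)/(t+3)$). The backward bilinear term $(t+1)\langle Ax_t-Ax_{t+1},y_t-y_{t-1}\rangle$ I would bound by Young's inequality with parameter $\tau_{t-1}$ exactly as in (\ref{eq3.16})/(\ref{eq3.36}); since $(t+1)\tau_{t-1}=t(t+1)\tau\le (t+2)(t+1)\tau=(t+2)\tau_t$, its $\|x_t-x_{t+1}\|^2$ part is absorbed by the negative $-(t+2)\tau_t\|A\|^2\|x_{t+1}-x_t\|^2/2$ from the primal identity, while its $\|y_t-y_{t-1}\|^2$ part will cancel, one index later, against the $\|y_{t+1}-y_t\|^2$ term of the dual identity.

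Next I would sum over $t=1,\dots,k$. Writing $\Psi_s=(s+1)\langle Ax-Ax_s,y_{s-1}-y_s\rangle$, the two forward bilinear terms form $\Psi_{t+1}-\Psi_t$, which telescopes to $\Psi_{k+1}-\Psi_1$; with the convention $y_0=y_1$ (as in Theorem~3.3) one has $\Psi_1=0$, leaving only $\Psi_{k+1}=(k+2)\langle Ax-Ax_{k+1},y_k-y_{k+1}\rangle$. This leftover I would split by Young's inequality with $\tau_k$: its $\|y_k-y_{k+1}\|^2/\tau_k$ piece is killed by the unmatched $-(k+2)\|y_{k+1}-y_k\|^2/(2\tau_k)$ of the terminal dual identity, and its $\tau_k\|A\|^2\|x-x_{k+1}\|^2$ piece is dominated by the unmatched negative $-(k+2)(\tau_k\|A\|^2+\mu_f)\|x-x_{k+1}\|^2/2$, leaving $-(k+2)\mu_f\|x-x_{k+1}\|^2/2\le0$.

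The heart of the argument is the telescoping of the squared-distance terms. For the primal iterates the residual coefficient on $\|x-x_{t+1}\|^2$ across steps $t$ and $t+1$ is $(t+3)\tau_{t+1}\|A\|^2-(t+2)(\tau_t\|A\|^2+\mu_f)$, and with $\tau_t=(t+1)\tau$ this reduces exactly to the scalar inequality $2\tau\|A\|^2\le\mu_f$, which is the parameter constraint in (\ref{eq4.9}); for the dual iterates the coefficients $c_t=(t+2)/(2(t+1)\tau)$ are decreasing, so those residuals are already nonpositive. Dropping all nonpositive leftovers, only the initial boundary terms survive: $3\tau\|A\|^2\|x-x_1\|^2$ (from $(1+2)\tau_1\|A\|^2/2$ with $\tau_1=2\tau$) and $c_1\|y-y_1\|^2=\tfrac{3}{4\tau}\|y-y_1\|^2$. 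Multiplying by $2$ to clear the halves gives $2\sum_{t=1}^k(t+2)E_{t+1}\le 6\tau\|A\|^2\|x-x_1\|^2+\tfrac{3}{2\tau}\|y-y_1\|^2$, and since $\sum_{t=1}^k(t+2)=k(k+5)/2$, Jensen's inequality with weights $t+2$ (the convexity–concavity of $L$, as for (\ref{eq2.14})) yields $k(k+5)\tilde E_{k+1}\le 2\sum(t+2)E_{t+1}$, which is (\ref{eq4.10}). I expect the main obstacle to be the careful bookkeeping at the two boundaries — ensuring the $\|x-x_{k+1}\|^2$ and $\|y_k-y_{k+1}\|^2$ produced by the terminal Young step are fully absorbed (which is precisely what forces $\eta_t=1/(\tau_t\|A\|^2)$ together with $(t+1)\tau_{t-1}\le(t+2)\tau_t$) — and verifying that the single condition $2\tau\|A\|^2\le\mu_f$ drives the primal telescoping, the dual telescoping being free because the absence of $\mu_g$ is compensated by the decreasing weights $(t+2)/\tau_t$.
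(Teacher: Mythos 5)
Your proposal is correct and follows essentially the same route as the paper's proof: start from (\ref{eq4.6}) with $\mu_g=0$, expand the proximal inner products by three-point identities, weight by $(t+2)$ using $(t+2)\alpha_t=t+1$, bound the backward bilinear term by Young's inequality, telescope the forward bilinear terms down to the terminal one (handled by a second Young step with parameter $\tau_k$), telescope the primal distances under the condition $2\tau\|A\|^2\le\mu_f$ (the paper's (\ref{eq4.18})), and finish with Jensen using $\sum_{t=1}^k(t+2)=k(k+5)/2$. The only deviation is immaterial bookkeeping: you take the Young parameter $\tau_{t-1}$ for the backward bilinear term (exact cancellation in the $y$-differences, slack in $x$), whereas the paper takes $(t+2)\tau$ (exact cancellation in $x$, slack in $y$); both yield the identical bound (\ref{eq4.10}).
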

  \begin{proof}
   Plugging ${\tau _t}$, ${\eta _t}$  in (\ref{eq4.9}) and ${\mu _g} = 0$   into (\ref{eq4.6}), we have
   \begin{equation}
   \begin{array}{l}
   2{E_{t + 1}} \le (t + 1)\tau ||A|{|^2}(||x - {x_t}|{|^2} - ||x - {x_{t + 1}}|{|^2} - ||{x_{t + 1}} - {x_t}|{|^2})\\
   - {\mu _f}||x - {x_{t + 1}}|{|^2} + (||y - {y_t}|{|^2} - ||y - {y_{t + 1}}|{|^2} - ||{y_{t + 1}} - {y_t}|{|^2})/{\tau _t}\\
   + 2\left\langle {Ax - A{x_{t + 1}},{y_t} - {y_{t + 1}}} \right\rangle  - 2\left\langle {Ax - A{x_t},{y_{t - 1}} - {y_t}} \right\rangle {\alpha _t}\\
   + 2\left\langle {A{x_t} - A{x_{t + 1}},{y_t} - {y_{t - 1}}} \right\rangle {\alpha _t}
   \end{array}
   \end{equation}
  Multiplying above inequality with $(t + 2)$
     and using $(t + 2){\alpha _t} = (t + 1)$ from (\ref{eq4.9}), we have 
     \begin{equation}
     \begin{array}{l}
     2(t + 2){E_{t + 1}} \le (t + 2)(t + 1)\tau ||A|{|^2}(||x - {x_t}|{|^2} - ||x - {x_{t + 1}}|{|^2} - ||{x_{t + 1}} - {x_t}|{|^2})\\
     - (t + 2){\mu _f}||x - {x_{t + 1}}|{|^2} + \frac{{t + 2}}{{t + 1}}(||y - {y_t}|{|^2} - ||y - {y_{t + 1}}|{|^2} - ||{y_{t + 1}} - {y_t}|{|^2})/\tau \\
     + 2(t + 2)\left\langle {Ax - A{x_{t + 1}},{y_t} - {y_{t + 1}}} \right\rangle  - 2\left\langle {Ax - A{x_t},{y_{t - 1}} - {y_t}} \right\rangle (t + 1)\\
     + 2\left\langle {A{x_t} - A{x_{t + 1}},{y_t} - {y_{t - 1}}} \right\rangle (t + 1)
     \end{array}
    \label{eq4.13} 
     \end{equation}
    Noting that
    \begin{equation}
   \begin{array}{l}
    2(t + 1)\left\langle {A{x_t} - A{x_{t + 1}},{y_t} - {y_{t - 1}}} \right\rangle \\
    \le 2(t + 1)||A{x_t} - A{x_{t + 1}}|| \cdot ||{y_t} - {y_{t - 1}}||\\
    \le (t + 1)\left( {||A{x_t} - A{x_{t + 1}}|{|^2}(t + 2)\tau  + ||{y_t} - {y_{t - 1}}|{|^2}/[(t + 2)\tau ]} \right)\\
    \le (t + 1)\left( {||A|{|^2}||{x_t} - {x_{t + 1}}|{|^2}(t + 2)\tau  + ||{y_t} - {y_{t - 1}}|{|^2}/[(t + 2)\tau ]} \right)
    \end{array}
    \label{eq4.14}
    \end{equation} 
    Summing up (\ref{eq4.13}) over $t=1,2,...k$  with
    $\frac{{t + 2}}{{t + 1}} > \frac{{t + 3}}{{t + 2}}$,
    ${y_1} - {y_0} = 0$ and using (\ref{eq4.14}), we have
\[\begin{array}{l}
2\sum\limits_{t = 1}^k {(t + 2){E_{t + 1}}} \\
\le \sum\limits_{t = 1}^k {\left[ {(t + 2)(t + 1)\tau ||A|{|^2}(||x - {x_t}|{|^2} - ||x - {x_{t + 1}}|{|^2}) - (t + 2){\mu _f}||x - {x_{t + 1}}|{|^2}} \right]} \\
- \sum\limits_{t = 1}^k {(t + 2)(t + 1)\tau ||A|{|^2}||{x_{t + 1}} - {x_t}|{|^2}}  + 2(k + 2)\left\langle {Ax - A{x_{k + 1}},{y_k} - {y_{k + 1}}} \right\rangle \\
+ \left( {\frac{3}{2}||y - {y_1}|{|^2} - \frac{{k + 2}}{{k + 1}}||y - {y_{k + 1}}|{|^2} - \sum\limits_{t = 1}^k {\frac{{t + 2}}{{t + 1}}} ||{y_{t + 1}} - {y_t}|{|^2}} \right)/\tau \\
+ \sum\limits_{t = 1}^k {(t + 1)\left( {||A|{|^2}||{x_t} - {x_{t + 1}}|{|^2}(t + 2)\tau  + ||{y_t} - {y_{t - 1}}|{|^2}/[(t + 2)\tau ]} \right)} 
\end{array}\] 
     The above inequality can be simplified as
     \begin{equation}
     \begin{array}{l}
     2\sum\limits_{t = 1}^k {(t + 2){E_{t + 1}}}  \le \sum\limits_{t = 1}^k {\left[ {(t + 2)(t + 1)\tau ||A|{|^2}(||x - {x_t}|{|^2} - ||x - {x_{t + 1}}|{|^2})} \right]} \\
     - \sum\limits_{t = 1}^k {(t + 2){\mu _f}||x - {x_{t + 1}}|{|^2}}  + 2(k + 2)\left\langle {Ax - A{x_{k + 1}},{y_k} - {y_{k + 1}}} \right\rangle \\
     + \left( {\frac{3}{2}||y - {y_1}|{|^2} - \frac{{k + 2}}{{k + 1}}||y - {y_{k + 1}}|{|^2} - \frac{{k + 2}}{{k + 1}}||{y_{k + 1}} - {y_k}|{|^2}} \right)/\tau 
     \end{array}
     \label{eq4.15}
     \end{equation}
     Note that
     \begin{equation}
     \begin{array}{l}
     2(k + 2)\left\langle {Ax - A{x_{k + 1}},{y_k} - {y_{k + 1}}} \right\rangle \\
     \le 2(k + 2)||Ax - A{x_{k + 1}}|| \cdot ||{y_k} - {y_{k + 1}}||\\
     \le (k + 2)((k + 1)||Ax - A{x_{k + 1}}|{|^2} + ||{y_k} - {y_{k + 1}}|{|^2}/(k + 1))\\
     \le (k + 2)\left\{ {[(k + 1)\tau ]||A|{|^2}||x - {x_{k + 1}}|{|^2} + ||{y_k} - {y_{k + 1}}|{|^2}/[\tau (k + 1)]} \right\}
     \end{array}
     \label{eq4.16}
     \end{equation}
     
     Plugging (\ref{eq4.16}) into (\ref{eq4.15}) leads to
     \begin{equation}
     \begin{array}{l}
     2\sum\limits_{t = 1}^k {(t + 2){E_{t + 1}}}  \le \sum\limits_{t = 1}^k {\left[ {(t + 2)(t + 1)\tau ||A|{|^2}(||x - {x_t}|{|^2} - ||x - {x_{t + 1}}|{|^2})} \right]} \\
     - \sum\limits_{t = 1}^k {(t + 2){\mu _f}||x - {x_{t + 1}}|{|^2}}  + \left( {\frac{3}{2}||y - {y_1}|{|^2} - \frac{{k + 2}}{{k + 1}}||y - {y_{k + 1}}|{|^2}} \right)/\tau \\
     + (k + 2)(k + 1)\tau ||A|{|^2}||x - {x_{k + 1}}|{|^2}
     \end{array}
     \label{eq4.17}
     \end{equation}
     Suppose condition
     \begin{equation}
     (t + 2)(t + 1)\tau ||A|{|^2} + (t + 2){\mu _f} \ge (t + 3)(t + 2)\tau ||A|{|^2}
     \label{eq4.18}
     \end{equation}
     is satisfied.
     we have
     \[\begin{array}{l}
     2\sum\limits_{t = 1}^k {(t + 2){E_{t + 1}}}  \le 6\tau ||A|{|^2}||x - {x_1}|{|^2} - (k + 2)(k + 1)\tau ||A|{|^2}||x - {x_{k + 1}}|{|^2}\\
     + \left( {\frac{3}{2}||y - {y_1}|{|^2} - \frac{{k + 2}}{{k + 1}}||y - {y_{k + 1}}|{|^2}} \right)/\tau  + (k + 2)(k + 1)\tau ||A|{|^2}||x - {x_{k + 1}}|{|^2}
     \end{array}\]
     which leads to
     \[\sum\limits_{t = 1}^k {(t + 2){E_{t + 1}}}  \le 3\tau ||A|{|^2}||x - {x_1}|{|^2} + \frac{3}{{4\tau }}||y - {y_1}|{|^2}\]
     Using above inequality and Jensen inequality, we have (\ref{eq4.10}). Condition (\ref{eq4.18}) can be simplified as 
     $(t + 2){\mu _f} \ge (2t + 4)\tau ||A|{|^2}$,
     which is satisfied by ${\mu _f}/2 \ge \tau ||A|{|^2}$  from (\ref{eq4.9}).
  \end{proof}

 \begin{remark}\label{remark4.1}
Applying EDPD without strongly convex component to weakly convex
saddle point problem (\ref{eq1.1})  with ${\mu _f} = {\mu _g} = 0$, 
and letting dual extrapolation parameter ${\alpha _t} = 1$, dual step size ${\tau _t} = \tau  > 0$ and primal step size
 ${\eta _t} = \eta  \le 1/(\tau ||A|{|^2})$, then we have
 \[{\tilde E_{k + 1}} \le \frac{1}{{2k}}\left( {||x - {x_1}|{|^2}||A|{|^2}\tau  + ||y - {y_1}|{|^2}/\tau } \right)\]
 With  ${\tilde E_{k + 1}} = L({\tilde x_{k + 1}},y) - L(x,{\tilde y_{k + 1}})$, ${\tilde x_{k + 1}} = (1/k)\sum\limits_{t = 1}^k {{x_{t + 1}}}$
 and  ${\tilde y_{k + 1}} = (1/k)\sum\limits_{t = 1}^k {{y_{t + 1}}}$.
 The proof is relatively simpler and omitted.
 
 \end{remark}

\subsection{EDPD with Strongly Convex Dual component }\label{section4.4}  

\begin{theorem}
	For strongly convex saddle point problem (\ref{eq1.1}) with 
	${\mu _g} > 0, {\mu _f} = 0$, Let
	$\{ {x_{t + 1}},{y_{t + 1}}\} $
	be produced by EDPD (\ref{eq4.1})-(\ref{eq4.3}), and extrapolation parameter 
	${\alpha _{t + 1}}$, dual step-size  ${\tau _t}$ and primal step-size ${\eta _t}$ be set as
	\begin{equation}
	{\alpha _{t + 1}} = \frac{{t + 1}}{{t + 2}},\begin{array}{*{20}{c}}
	{}
	\end{array}\tau  = \frac{{2.5}}{{{\mu _g}}},\begin{array}{*{20}{c}}
	{}
	\end{array}{\tau _t} = \frac{\tau }{{t + 1}},\begin{array}{*{20}{c}}
	{}
	\end{array}{\eta _t} = \frac{{(t + 1)}}{{\tau ||A|{|^2}}},\begin{array}{*{20}{c}}
	{}
	\end{array}{y_0} = {y_1}
	\label{eq4.19}
	\end{equation}
	It holds for EDPD that
	\begin{equation}
	{\tilde E_{k + 1}} \le \frac{2}{{k(k + 3)}}\left[ {||A|{|^2}\tau ||x - {x_1}|{|^2}/2 + ||y - {y_1}|{|^2}2/\tau } \right]
	\label{eq4.20}
	\end{equation}
	where
	\begin{equation}
	{\tilde E_{k + 1}} = L({\tilde x_{k + 1}},y) - L(x,{\tilde y_{k + 1}}),\begin{array}{*{20}{c}}
	{}
	\end{array}{\tilde x_{k + 1}} = \sum\limits_{t = 1}^k {(t + 1){x_{t + 1}}} /\sum\limits_{t = 1}^k {(t + 1)} 
	\label{eq4.21}
	\end{equation}
	and
	\begin{equation}
	{\tilde y_{k + 1}} = \sum\limits_{t = 1}^k {(t + 1){y_{t + 1}}} /\sum\limits_{t = 1}^k {(t + 1)} 
	\label{eq4.22}
	\end{equation}
	
\end{theorem}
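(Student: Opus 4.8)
The plan is to mirror the proof of Theorem 4.2, but to exploit the strong convexity of $g$ (through the $-\mu_g\|y-y_{t+1}\|^2/2$ term in (\ref{eq4.6})) to telescope the dual distance terms, while the primal side now carries no strong convexity ($\mu_f=0$). I would start from the master inequality (\ref{eq4.6}), substitute the parameters (\ref{eq4.19}) — so that $1/\eta_t = \tau\|A\|^2/(t+1)$ and $1/\tau_t = (t+1)/\tau$ — and rewrite the two proximal inner products by the polarization identity $2\langle w-w_{t+1},w_{t+1}-w_t\rangle = \|w-w_t\|^2 - \|w-w_{t+1}\|^2 - \|w_{t+1}-w_t\|^2$ applied to $w=x$ and $w=y$. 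Multiplying the resulting inequality by the weight $2(t+1)$ clears the polarization factor, turns the primal coefficient into the constant $\tau\|A\|^2$ and the dual coefficient into $(t+1)^2/\tau$, and converts $\alpha_t = t/(t+1)$ into the clean identity $(t+1)\alpha_t = t$ in the three bilinear terms.

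Next I would organize the bilinear terms as in Theorem 4.2. Writing $b_t = 2t\langle Ax-Ax_t, y_{t-1}-y_t\rangle$, the weighted first two cross terms become $b_{t+1}-b_t$, which telescopes over $t=1,\dots,k$ to $b_{k+1}-b_1$; the choice $y_0=y_1$ in (\ref{eq4.19}) kills $b_1$, leaving only the boundary term $2(k+1)\langle Ax-Ax_{k+1},y_k-y_{k+1}\rangle$. The remaining quadratic cross term $2t\langle Ax_t-Ax_{t+1},y_t-y_{t-1}\rangle$ I would split by a balanced Cauchy--Schwarz with weights $\tau$ and $t^2/\tau$, namely $2t\langle Ax_t-Ax_{t+1},y_t-y_{t-1}\rangle \le \tau\|A\|^2\|x_t-x_{t+1}\|^2 + (t^2/\tau)\|y_t-y_{t-1}\|^2$. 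The first piece is \emph{designed} to cancel the negative $\tau\|A\|^2\|x_{t+1}-x_t\|^2$ produced by the primal polarization, and the second piece matches the $\|y_t-y_{t-1}\|^2$ term of the dual polarization at the shifted index, so that after summation only a single leftover $-(k+1)^2/\tau\|y_{k+1}-y_k\|^2$ survives.

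I would then bound the surviving boundary term by the same balanced Cauchy--Schwarz, $2(k+1)\langle Ax-Ax_{k+1},y_k-y_{k+1}\rangle \le \tau\|A\|^2\|x-x_{k+1}\|^2 + (k+1)^2/\tau\|y_k-y_{k+1}\|^2$; its dual piece is absorbed by the leftover $-(k+1)^2/\tau\|y_{k+1}-y_k\|^2$, and its primal piece $+\tau\|A\|^2\|x-x_{k+1}\|^2$ cancels the $-\tau\|A\|^2\|x-x_{k+1}\|^2$ left by the telescoped primal distances $\tau\|A\|^2(\|x-x_1\|^2-\|x-x_{k+1}\|^2)$. The heart of the argument is the dual telescoping: after grouping, the distance-to-$y$ terms read $(t+1)^2/\tau\|y-y_t\|^2 - [(t+1)^2/\tau + (t+1)\mu_g]\|y-y_{t+1}\|^2$, which collapses provided $(t+1)^2/\tau + (t+1)\mu_g \ge (t+2)^2/\tau$, i.e. $\tau\mu_g \ge (2t+3)/(t+1)$; since the right side is largest at $t=1$ with value $5/2$, the choice $\tau=2.5/\mu_g$ in (\ref{eq4.19}) is exactly what makes it hold for every $t$, and the telescoped dual sum is bounded by $4\|y-y_1\|^2/\tau$.

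Combining these cancellations yields $2\sum_{t=1}^k (t+1)E_{t+1} \le \tau\|A\|^2\|x-x_1\|^2 + 4\|y-y_1\|^2/\tau$. Dividing by $2$, invoking convexity of $L(\cdot,y)$ and $-L(x,\cdot)$ through Jensen's inequality with the weighted averages (\ref{eq4.21})--(\ref{eq4.22}), and using $\sum_{t=1}^k (t+1) = k(k+3)/2$, delivers (\ref{eq4.20}). I expect the main obstacle to be the simultaneous bookkeeping of the three bilinear terms: one must choose the Cauchy--Schwarz weights so that a \emph{single} constant $\tau$ balances both the primal quadratic cancellation and the dual index-shift, and must track the leftover $\|y_{k+1}-y_k\|^2$ carefully so that it exactly offsets the boundary term rather than being discarded prematurely.
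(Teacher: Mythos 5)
Your proposal is correct and follows essentially the same route as the paper's own proof: the same weighting by $2(t+1)$, the same telescoping of the bilinear terms using $y_0=y_1$, the same balanced Cauchy--Schwarz splits (with weights $\tau$ and $t^2/\tau$, resp. $(k+1)^2/\tau$) that cancel the primal and dual quadratic remainders, the same dual-telescoping condition $(t+1)^2/\tau+(t+1)\mu_g\ge(t+2)^2/\tau$ verified at $t=1$ by $\tau=2.5/\mu_g$, and the same Jensen step with $\sum_{t=1}^k(t+1)=k(k+3)/2$. The only difference is cosmetic ordering (you apply Cauchy--Schwarz before invoking the telescoping condition, the paper does the reverse), which affects nothing since the two operations act on disjoint groups of terms.
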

\begin{proof}
Plugging  ${\mu _f} = 0$  into (\ref{eq4.6}), then we have

\[\begin{array}{l}
{E_{t + 1}} \le \left( {||x - {x_t}|{|^2} - ||x - {x_{t + 1}}|{|^2} - ||{x_{t + 1}} - {x_t}|{|^2}} \right)/(2{\eta _t})\\
+ \left( {||y - {y_t}|{|^2} - ||y - {y_{t + 1}}|{|^2} - ||{y_{t + 1}} - {y_t}|{|^2}} \right)/(2{\tau _t}) - {\mu _g}||y - {y_{t + 1}}|{|^2}/2\\
+ \left\langle {Ax - A{x_{t + 1}},{y_t} - {y_{t + 1}}} \right\rangle  - \left\langle {Ax - A{x_t},{y_{t - 1}} - {y_t}} \right\rangle {\alpha _t}\\
+ \left\langle {A{x_t} - A{x_{t + 1}},{y_t} - {y_{t - 1}}} \right\rangle {\alpha _t}
\end{array}\]
Multiplying above inequality with  $2(t + 1)$
 and using $1/{\eta _t} = \tau ||A|{|^2}/(t + 1)$,
 $1/{\tau _t} = (t + 1)/\tau$,
 $(t + 1){\alpha _t} = t$  from (\ref{eq4.19}), we have
 \begin{equation}
 \begin{array}{l}
 2(t + 1){E_{t + 1}} \le \tau ||A|{|^2}\left( {||x - {x_t}|{|^2} - ||x - {x_{t + 1}}|{|^2} - ||{x_{t + 1}} - {x_t}|{|^2}} \right)\\
 + (t + 1)\left( {||y - {y_t}|{|^2} - ||y - {y_{t + 1}}|{|^2} - ||{y_{t + 1}} - {y_t}|{|^2}} \right)(t + 1)/\tau \\
 - (t + 1){\mu _g}||y - {y_{t + 1}}|{|^2} + 2(t + 1)\left\langle {Ax - A{x_{t + 1}},{y_t} - {y_{t + 1}}} \right\rangle \\
 - 2t\left\langle {Ax - A{x_t},{y_{t - 1}} - {y_t}} \right\rangle  + 2t\left\langle {A{x_t} - A{x_{t + 1}},{y_t} - {y_{t - 1}}} \right\rangle 
 \end{array}
 \label{eq4.23} 
 \end{equation}
 Summing up (\ref{eq4.23}) over $t=1,2,...k$ with
 ${y_1} - {y_0} = 0$, we have
 \begin{equation}
  \begin{array}{l}
 2\sum\limits_{t = 1}^k {(t + 1){E_{t + 1}}}  \le ||A|{|^2}\tau ||x - {x_1}|{|^2} - ||A|{|^2}\tau ||x - {x_{k + 1}}|{|^2}\\
 - \sum\limits_{t = 1}^k {||A|{|^2}\tau ||{x_{t + 1}} - {x_t}|{|^2}}  - \sum\limits_{t = 1}^k {(t + 1){\mu _g}||y - {y_{t + 1}}|{|^2}} \\
 + \sum\limits_{t = 1}^k {(t + 1)\left( {||y - {y_t}|{|^2} - ||y - {y_{t + 1}}|{|^2} - ||{y_{t + 1}} - {y_t}|{|^2}} \right)(t + 1)/\tau } \\
 + 2(k + 1)\left\langle {Ax - A{x_{k + 1}},{y_k} - {y_{k + 1}}} \right\rangle  + \sum\limits_{t = 1}^k {2t\left\langle {A{x_t} - A{x_{t + 1}},{y_t} - {y_{t - 1}}} \right\rangle } 
 \end{array}
 \label{eq4.24}
 \end{equation}
 Suppose condition
 \begin{equation}
 {(t + 1)^2}/\tau  + (t + 1){\mu _g} \ge {(t + 2)^2}/\tau 
 \label{eq4.25}
 \end{equation}

 is satisfied.
 Using the above condition to (\ref{eq4.24}), then we have
 \begin{equation}
 \begin{array}{l}
 2\sum\limits_{t = 1}^k {(t + 1){E_{t + 1}}}  \le ||A|{|^2}\tau ||x - {x_1}|{|^2} - ||A|{|^2}\tau ||x - {x_{k + 1}}|{|^2}\\
 - \sum\limits_{t = 1}^k {||A|{|^2}\tau ||{x_{t + 1}} - {x_t}|{|^2}}  - \sum\limits_{t = 1}^k {{{(t + 1)}^2}||{y_{t + 1}} - {y_t}|{|^2}/\tau } \\
 + ||y - {y_1}|{|^2}4/\tau  - ||y - {y_{k + 1}}|{|^2}{(k + 1)^2}/\tau  - (k + 1){\mu _g}||y - {y_{k + 1}}|{|^2}\\
 + 2(k + 1)\left\langle {Ax - A{x_{k + 1}},{y_k} - {y_{k + 1}}} \right\rangle  + \sum\limits_{t = 1}^k {2t\left\langle {A{x_t} - A{x_{t + 1}},{y_t} - {y_{t - 1}}} \right\rangle } 
 \end{array}
 \label{eq4.26}
 \end{equation}
 Noting the last term in above inequality

 \[\begin{array}{l}
 2t\left\langle {A{x_t} - A{x_{t + 1}},{y_t} - {y_{t - 1}}} \right\rangle \\
 \le 2||A{x_t} - A{x_{t + 1}}||||{y_t} - {y_{t - 1}}||t\\
 \le ||A|{|^2}||{x_t} - {x_{t + 1}}|{|^2}\tau  + ||{y_t} - {y_{t - 1}}|{|^2}{t^2}/\tau 
 \end{array}\]
 
 Using the above inequality into (\ref{eq4.26}), we have
\begin{equation}
 \begin{array}{l}
2\sum\limits_{t = 1}^k {(t + 1){E_{t + 1}}}  \le ||A|{|^2}\tau ||x - {x_1}|{|^2} - ||A|{|^2}\tau ||x - {x_{k + 1}}|{|^2}\\
- {(k + 1)^2}||{y_{k + 1}} - {y_k}|{|^2}/\tau  + ||y - {y_1}|{|^2}4/\tau  - ||y - {y_{k + 1}}|{|^2}{(k + 1)^2}/\tau \\
- (k + 1){\mu _g}||y - {y_{k + 1}}|{|^2} + 2(k + 1)\left\langle {Ax - A{x_{k + 1}},{y_k} - {y_{k + 1}}} \right\rangle 
\end{array}
\label{eq4.27}
\end{equation}
Noting last term in above inequality

\[\begin{array}{l}
2(k + 1)\left\langle {Ax - A{x_{k + 1}},{y_k} - {y_{k + 1}}} \right\rangle \\
\le 2(k + 1)||A||||x - {x_{k + 1}}||||{y_k} - {y_{k + 1}}||\\
\le ||A|{|^2}||x - {x_{k + 1}}|{|^2}\tau  + {(k + 1)^2}||{y_k} - {y_{k + 1}}|{|^2}/\tau 
\end{array}\]
Plugging above inequality into (\ref{eq4.27}) leads to
\begin{equation}
\sum\limits_{t = 1}^k {(t + 1){E_{t + 1}}}  \le ||A|{|^2}\tau ||x - {x_1}|{|^2}/2 + ||y - {y_1}|{|^2}2/\tau 
\label{eq4.28}
\end{equation}

Condition (\ref{eq4.25}) is equivalent to
$\tau  \ge (2t + 3)/[(t + 1){\mu _g}] = \left( {2 + 1/(t + 1)} \right)/{\mu _g}$, which is satisfied by setting
$\tau  = 2.5/{\mu _g}$  as in (\ref{eq4.19}). Using (\ref{eq4.28}) and Jensen inequality, we have (\ref{eq4.20})-(\ref{eq4.22}). 

\end{proof}

\section{Applications on Image Deblurring}
 In this section, we apply LDPD in section 3 to Gaussian noisy image deblurring and EDPD in section 4 to Salt-Pepper Noisy image deblurring.
 \subsection{Gaussian Noisy Image Deblurring}\label{section5.1}
 Here, we apply LDPD(\ref{eq2.1}-\ref{eq2.6}) to Gaussian noisy image deblurring. Recovering m row by n column grayscale image $x \in {R^{mn \times 1}}$
 from Gaussian noisy and blurred observation   can be written as
 \begin{equation}
 {\min _x}\begin{array}{*{20}{c}}
 {}
 \end{array}\frac{\mu }{2}||Kx - b|{|^2} + ||Dx|{|_{2,1}}
 \label{eq5.1}
 \end{equation}
 where $K \in {R^{mn \times mn}}$  denotes convolution operator and  
$D \in {R^{2mn \times mn}}$ is total difference operator consisting of vertical and horizontal difference operator \cite{Wang2008A} \cite{Xie2017A}. Total variation for image $x \in {R^{mn \times 1}}$
is written as $||Dx|{|_{2,1}}$ . Setting convolution operator $K = I$  will reduce the above formulation to Gaussian denoising. The smoothed total variation version of (\ref{eq5.1}) can be written as saddle point model with strongly convex dual component 
 \begin{equation}
 \mathop {\min }\limits_x \left\{ {\frac{\mu }{2}||Kx - b|{|^2} + \mathop {\max }\limits_y \left( {\left\langle {Dx,y} \right\rangle  - \frac{{{\mu _g}}}{2}||y|{|^2}} \right)} \right\}
 \label{eq5.2}
 \end{equation}
 where the max term denotes smoothed total variation by imposing the strongly convexity into dual component\cite{Goldfarb2013Fast}\cite{Becker2009NESTA}. Apparently 
 ${\mu _g} = 0$
  will deliver the common Total variation 
 $\mathop {\max }\limits_y \left\langle {Dx,y} \right\rangle  = ||Dx|{|_{2,1}}$. The above saddle point problem (\ref{eq5.2}) is a special case of (\ref{eq1.1}) with
 \[f(x) = \frac{\mu }{2}||Kx - b|{|^2},\begin{array}{*{20}{c}}
 {}
 \end{array}A = D,\begin{array}{*{20}{c}}
 {}
 \end{array}g(y) = {\delta _Y}(y) + \frac{{{\mu _g}}}{2}||y|{|^2}\]
 where ${\delta _Y}(y)$ is the indicator function for convex set 
 
 \[Y = \{ y|y \in {R^{2mn \times 1}},||{y_i}|{|_2} \le 1\begin{array}{*{20}{c}}
 {,{y_i} \in {R^2},\begin{array}{*{20}{c}}
 	{i = 1,...,mn}
 	\end{array}}
 \end{array}\} \]
 Update for dual variable $y$ is implemented as 
  ${y_{t + 1}} = {P_{|| \cdot |{|_2} \le 1}}\left[ {({{\bar y}_t} + {\tau _t}D{x_t})/{\rm{(}}{\tau _t}{\mu _g}{\rm{ + 1)}}} \right]$
   where ${P_{|| \cdot |{|_2} \le 1}}$
 denotes projection onto Euclidean Ball. Update for primal variable 
 $x$ uses multi-step gradient $\nabla f({\hat x_t})$ and proceeds as  
 ${x_{t + 1}} = {x_t} - {\eta _t}(\nabla f({\hat x_t}) + {D^T}{y_{t + 1}})$.
 \subsection{Salt-Pepper Noisy Image Deblurring}
Here we apply EDPD(\ref{eq4.1}-\ref{eq4.3}) to Salt-Pepper Noisy image deblurring. Recovering image  $x \in {R^{mn \times 1}}$ from Salt-Pepper Noisy and blurred observation $b$ can be written as
 \begin{equation}
\mathop {\min }\limits_x \alpha ||Kx - b|{|_1} + ||Dx|{|_{2,1}}
\label{eq5.3}
 \end{equation}
 Here convolution operator $K$ and total difference operator $D$  are the same as section \ref{section5.1}. Setting $K=I$  will reduces above formulation to salt pepper noisy image denoising.
 The saddle point model of (\ref{eq5.3}) with strongly convex dual component can be written as
 \begin{equation}
 \mathop {\min }\limits_x \left\{ {\alpha ||Kx - b|{|_1} + \mathop {\max }\limits_y \left( {\left\langle {Dx,y} \right\rangle  - \frac{{{\mu _g}}}{2}||y|{|^2}} \right)} \right\}
 \label{eq5.4}
  \end{equation}
 and is equivalent to
 \begin{equation}
 \begin{array}{l}
 \mathop {\min }\limits_x \mathop {\max }\limits_u \alpha \left\langle {Kx - b,u} \right\rangle  + \mathop {\max }\limits_v \left\langle {Dx,v} \right\rangle  - \frac{{{\mu _g}}}{2}||v|{|^2}\\
 s.t.\begin{array}{*{20}{c}}
 {}
 \end{array}||u|{|_\infty } \le 1,\begin{array}{*{20}{c}}
 {}
 \end{array}||{v_i}|{|_2} \le 1
 \end{array}
 \end{equation}
 which is a special case of saddle point model (\ref{eq1.1}) with 
\[\begin{array}{l}
f(x) = 0,\begin{array}{*{20}{c}}
{}
\end{array}A = \left( {\begin{array}{*{20}{c}}
	D\\
	{\alpha K}
	\end{array}} \right),\begin{array}{*{20}{c}}
{}
\end{array}y = \left( {\begin{array}{*{20}{c}}
	v\\
	u
	\end{array}} \right)\\
g(y) = {\delta _{\cal U}}(u) + \left\langle {\alpha b,u} \right\rangle  + {\delta _{\cal V}}(v) + {\mu _g}||y|{|^2}/2
\end{array}\]
Here ${\delta _{\cal U}}(u)$  and ${\delta _{\cal V}}(v)$
  denote respectively the indicator function of convex set
 \[{\cal U} = \{ u|u \in {R^{mn \times 1}},||u|{|_\infty } \le 1\} \]
 and
 \[{\cal V} = \{ v|\begin{array}{*{20}{c}}
 v
 \end{array} \in {R^{2mn \times 1}},||{v_i}|{|_2} \le 1\begin{array}{*{20}{c}}
 {,{v_i} \in {R^2},\begin{array}{*{20}{c}}
 	{i = 1,...,mn}
 	\end{array}}
 \end{array}\} \]
 Update of dual variable $y = [v;u]$
   is split into update of block variable $v$  and $u$  . Update of   $v$ proceeds as
   ${v_{t + 1}} = {P_{|| \cdot |{|_2} \le 1}}\left[ {({{\bar v}_t} + {\tau _t}D{x_t})/{\rm{(}}{\tau _t}{\mu _g}{\rm{ + 1)}}} \right]$
  which is a scaled gradient ascent and projection onto Euclidean Ball. Variable $u$  is updated by
     ${u_{t + 1}} = {P_{[ - 1,1]}}({\bar u_t} + {\tau _t}\alpha (K{x_t} - b))$ which contains a gradient ascent and projection onto $[-1,1]$  by 
     ${P_{[ - 1,1]}}$ . While update for primal variable $x$ proceeds as 
     ${x_{t + 1}} = {x_t} - {\eta _t}({D^T}{v_{t + 1}} + \alpha {K^T}{u_{t + 1}})$.
 
   \section{Experimental Reports}
   In this section, we made the comparison on noisy image deblurring to show the acceleration of LDPD and EDPD. Experiments were conducted in Windows 7 and MATLAB 7.1 on a computer with dual core 1.9GHZ CPU and 8GB memory. Image quality is measured by signal to noise ratio (SNR) in in decibel(dB) as
  SNR$({x_k}) = 20{\log _{10}}\left( {||{x^*} - \bar x||/||{x^*} - {x_k}||} \right)$
     . Here ${x^*}$ denotes original image,  
    $\bar x$  the average value of 
    ${x^*}$, and ${x_k}$  the recovered after $k$ iterations.

   \subsection{Gaussian Noisy Image Deblurring } 

  In this section, we compare three Linearized DPD(LDPD) methods including full acceleration by strongly convex dual component in section \ref{section3.3} with the rate
   $O([{L_f} + {\mu _g} + ||A||]/{k^2})$ , partial acceleration in section \ref{section3.2} with the rate $O({L_f}/{k^2} + ||A||/k)$  and single step gradient method in Remark \ref{remark3.1} with rate $O({L_f}/k + ||A||/k)$.

   Penalty $\mu$ in (\ref{eq5.2}) is set to 3000, while dual strongly convexity parameter is fixed as ${\mu _g} = 0.01$.
Using H =fspecial('motion',30,90+45)and imfilter(),motion blur with len=30 and theta=135 was imposed to image Lena of 256 by 256,
   and Gaussian noise of zero mean and standard deviation 3e-3 was added in by imnoise().All compared methods are terminated after 200 iterations.
   
   \begin{figure}[htb]
   	\centering
   	\label{fig:a}\includegraphics[scale=0.8]{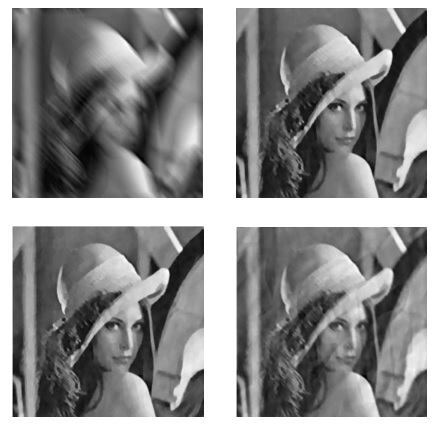}
   	\caption{Gaussian Noisy Image with Motion Blur and Recovery by LDPD}
   	\label{fig:deblurring1}
   \end{figure}
\begin{figure}[htb]
	\centering
	\includegraphics[scale=0.6]{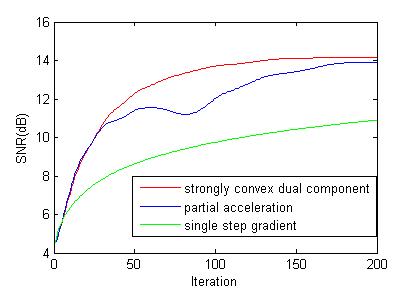}
	\caption{SNR History of Gaussian Noisy Image Deblurring by LDPD}
	\label{fig:snr1}
\end{figure}

In Figure 1, the left top is motion blurred Lenna image with guassian noise, the right top is recovered by LDPD with strongly convex dual component and obtains SNR14.14dB, the left bottom is restored by LDPD with partial acceleration while achieving SNR 13.91dB, the right bottom is computed by LDPD with single step gradient method and its SNR was 10.91dB. 

\indent Fig.2  illustrates the convergence behavior of above three methods, apparently LDPD with strongly convex dual component achieved full acceleration, while  partial acceleration was achieved by imposing multi-step gradient update on primal component. LDPD with single step gradient method lagged behind since it did not utilize any saddle point structure.

\subsection{Salt-Pepper Noisy Image Deblurring}
This test compare the performance of two Exact DPD methods including EDPD with strongly convex dual component in section \ref{section4.4} at the rate $O([{\mu _g} + ||A||]/{k^2})$, and EDPD without
strongly convex dual component in Remark \ref{remark4.1} at the rate $O(||A||/k)$.  Penalty $\alpha$ in (\ref{eq5.4}) is set to 4 while dual strongly convexity parameter
${\mu _g}$ is given the initial value $0.03$  and decreased by half after each 10 iterations. Using H = fspecial('average',5) and imfilter(), averaging blur with size 5 by 5 was imposed on Image man of size 256 by 256, and $20\%$ Salt-Pepper noise was added in. 

\begin{figure}[htb]
	\centering
	\includegraphics[scale=0.8]{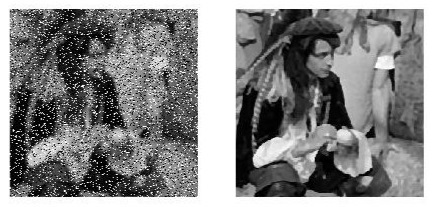}
	\caption{Salt Pepper Noisy Image with Average Blur and Recovery by EDPD.}
	\label{fig:man1}
\end{figure}
In Fig.3 ,the left is the blurred image corrupted by salt pepper noise, and
the right is restored by EDPD.
\begin{figure}[htb]
	\centering
	\includegraphics[scale=0.8]{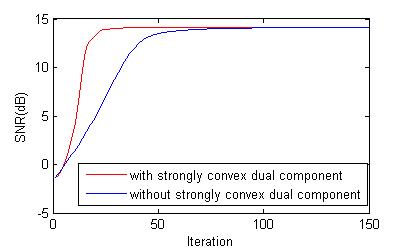}
	\caption{SNR History of Salt Pepper Noisy Image Deblurring by EDPD.}
	\label{fig:snr2}
\end{figure}
 More precisely, EDPD with or without strongly convex dual component achieved visual quality in terms of SNR 14.15dB, 14.13 respectively after 150 iterations. Though they obtained almost the same SNR, the SNR history  in Fig.4 shows the faster convergence behaviour of EDPD with strongly convex dual component over ordinary EDPD without strongly convex component.
 
 \section{Discussions}
 This paper presents a simple primal-dual proximal method named DPD which has linearization version LDPD and exact version EDPD. Both methods allows full acceleration by exploiting strong convexity of primal or dual component. Experiments on Gaussian or Salt Pepper noisy image deblurring verifies the acceleration of presented method.

\section*{Acknowledgements}
\emph{Thanks to arxiv.org for posting this manuscript.}

\bibliographystyle{unsrt}
\bibliography{ref}
\end{spacing}
\end{document}